\crefname{hypothesis}{Hypothesis}{Hypotheses}
\crefname{fact}{Fact}{Facts}
\title{Split-Merge Revisited: A Scalable Approach to Generalized Eigenvalue Problems\thanks{\funding{This work was funded by National Key Research and Development Program of China under grant 2021YFA1003303 and National Natural Science Foundation of China under grant 12171021.}}}
\author{Xiaozhi Liu\thanks{School of Mathematical Sciences, Beihang University, Beijing, 100191, People's Republic of China. (\email{xzliu@buaa.edu.cn}).}
\and Yong Xia \thanks{Corresponding author. School of Mathematical Sciences, Beihang University, Beijing, 100191, People's Republic of China. (\email{yxia@buaa.edu.cn}).}}
\newcommand{\red}[1]{\textcolor{red}{#1}}
\definecolor{bblue}{rgb}{0.855,0.933,0.98}
\definecolor{rred}{HTML}{DC143C}
\def\cha{\ding{55}}
\begin{document}

\maketitle

\begin{abstract}
The generalized eigenvalue problem (GEP) serves as a cornerstone in a wide range of applications in numerical linear algebra and scientific computing. However, traditional approaches that aim to maximize the classical Rayleigh quotient often suffer from numerical instability and limited computational efficiency, especially in large-scale settings. In this work, we explore an alternative difference-based formulation of GEP by minimizing a structured quadratic polynomial objective, which enables the application of efficient first-order optimization methods. We establish global convergence guarantees for these methods without requiring line search, and further introduce a transform-domain perspective that reveals the intrinsic connection and performance gap between classical first-order algorithms and the power method. Based on this insight, we develop an accelerated preconditioned mirror descent algorithm, which allows for flexible preconditioner design and improved convergence behavior. Lastly, we extend the recently proposed Split-Merge algorithm to the general GEP setting, incorporating richer second-order information to further accelerate convergence. Empirical results on both synthetic and real-world datasets demonstrate that our proposed methods achieve significant improvements over existing baselines in terms of both computational efficiency and numerical stability.
\end{abstract}

\begin{keywords}
generalized eigenvalue problem, non-convex optimization, first-order method, majorization-minimization, split-merge
\end{keywords}

\begin{MSCcodes}
15A18, 65F15, 90C26
\end{MSCcodes}

\section{Introduction}

The generalized eigenvalue problem (GEP) \cite{gep_2011} plays a fundamental role in numerical linear algebra, scientific computing, and statistical learning.
It forms the mathematical foundation for several key applications, including canonical correlation analysis \cite{cca_1992}, linear discriminant analysis \cite{lda_1992}, sufficient dimension reduction \cite{sdr_2005}, and harmonic retrieval estimation \cite{mathew2002quasi}.
These applications aim to extract meaningful low-dimensional structures within high-dimensional data, thereby facilitating downstream tasks such as regression \cite{kakade2007multi}, classification \cite{karampatziakis2014discriminative}, and semantic embedding learning \cite{dhillon2011multi}.

Formally, given a symmetric matrix $\boldsymbol{A} \in \mathbb{R}^{n \times n}$ and a positive definite (PD) matrix $\boldsymbol{B} \in \mathbb{R}^{n \times n}$, the symmetric-definite GEP seeks scalars $\lambda_i$ and nonzero vectors $\boldsymbol{u}_i$ satisfying
$$
\boldsymbol{A}\boldsymbol{u}_i = \lambda_i \boldsymbol{B} \boldsymbol{u}_i,\quad i=1,2,\ldots,n,
$$
where $\lambda_i$ are the generalized eigenvalues and $\boldsymbol{u}_i$ are the corresponding generalized eigenvectors of the matrix pair $(\boldsymbol{A}, \boldsymbol{B})$. The eigenvalues are assumed to be sorted in descending order: $\lambda_1 \geq \lambda_2 \geq \ldots\geq \lambda_n$.

In many practical scenarios, it suffices to compute only the top-$k$ generalized eigenvectors ($k$-GEP), as those associated with smaller eigenvalues typically contribute negligible useful information. In this work, we focus on the basic computational primitive of $1$-GEP, which corresponds to solving the following Rayleigh quotient maximization problem:
\begin{equation}
	\max_{\boldsymbol{x}\in \mathbb{R}^{n}} \frac{\boldsymbol{x}^T\boldsymbol{A}\boldsymbol{x}}{\boldsymbol{x}^T\boldsymbol{B}\boldsymbol{x}},\ \text{s.t.}\  \boldsymbol{x}\neq \boldsymbol{0}.
	\label{eq:gep_quotient}
\end{equation}

\paragraph{Quotient-based methods}
Most existing methods for the GEP aim to maximize the Rayleigh quotient in \eqref{eq:gep_quotient}, forming what we term quotient-based methods.
Many classical techniques for the standard eigenvalue problem (EP)\footnote{The standard EP is a special case of \eqref{eq:gep_quotient} when $\boldsymbol{B} = \boldsymbol{I}$.} can be naturally extended to the GEP, with notable examples including the power method \cite{pm_gep_1992}, Rayleigh quotient iteration \cite{rqi_gep_2005}, the Lanczos method \cite{gep_2011}, and the Jacobi-Davidson method \cite{jdqz_1998}.
To circumvent explicit computing $\boldsymbol{B}^{-1}$, recent advances have employed approximate linear system solvers. For example, Ge et al. \cite{ge2016efficient} proposed the GenELinK algorithm based on an inexact block power method, and Allen-Zhu and Li \cite{allen2017doubly} developed LazyEV, a doubly accelerated method with gap-free theoretical guarantees. However, these methods often require careful tuning of numerous hyperparameters, which can hinder practical deployment.
An alternative strategy reformulates problem \eqref{eq:gep_quotient} as a Riemannian optimization problem on the generalized Stiefel manifold \cite{absil2008optimization}, allowing the use of modern Riemannian optimization algorithms \cite{Absil2024optimization,xu2020practical}.

\paragraph{Difference-based methods}
Unlike the above quotient-based methods, Auchmuty \cite{auchmuty1991globally} proposed some unconstrained variational principles for solving the $1$-GEP of the matrix pair $(\boldsymbol{A}, \boldsymbol{B})$.
Their approach focuses on a special case where the objective function is a quartic polynomial in the variable $\boldsymbol{x}$, enabling the development of descent-based algorithms and Newton-type methods \cite{mongeau2004computing}. However, these higher-order approaches suffer from two key limitations: (1) the lack of global convergence guarantees, and (2) the need to solve linear systems with varying-coefficient matrices at each iteration.

Recently, Liu and Xia \cite{liu2025difference} introduced a generalized family of difference-based methods for the standard EP, motivated by the following quadratic difference formulation originally studied in \cite{auchmuty1991globally}:
$$
\min_{\boldsymbol{x} \in \mathbb{R}^n} \boldsymbol{x}^T\boldsymbol{x} - \left(\boldsymbol{x}^T \boldsymbol{A} \boldsymbol{x}\right)^{\frac{1}{2}},
$$
where $\boldsymbol{A}$ is a positive semidefinite (PSD) matrix. Based on this formulation, they proposed an algorithm called Split-Merge, which achieves optimal performance within the majorization-minimization (MM) framework.

In this paper, we extend the {Split-Merge} algorithm \cite{liu2025difference} to the GEP with an arbitrary PD matrix $\boldsymbol{B}$. This generalization enables difference-based optimization techniques to be applied beyond the standard case $\boldsymbol{B} = \boldsymbol{I}$, thereby broadening both their practical applicability and theoretical foundation.

\noindent \textbf{Contributions}.
The main contributions of this work are summarized as follows:
\begin{itemize}
	\item We thoroughly exploit the structure of the difference-based formulation for the GEP and establish the convergence of first-order optimization methods without requiring line search.
	\item Within a unified MM framework, we reveal the intrinsic gap between classical first-order optimization methods and the power method. We bridge this gap via a transform-domain framework, and further propose an accelerated algorithm based on a preconditioned mirror descent approach, which offers enhanced flexibility through a tunable preconditioner.
	\item To leverage richer second-order information of the objective, we extend the recently proposed Split-Merge approach \cite{liu2025difference} to the GEP setting, enabling improved convergence behavior.
	\item We validate the effectiveness of the proposed methods on both synthetic and real-world datasets, demonstrating significant performance gains compared to existing benchmark methods.
\end{itemize}

\noindent \textbf{Organization}.
The remainder of this paper is structured as follows.
\Cref{sec:preliminary} explores the foundational properties of the difference-based formulation, which is characterized by a quadratic polynomial objective.
\Cref{sec:Transform} establishes a fundamental connection between first-order optimization methods and the classical power method, motivating the design of an accelerated approach based on preconditioned mirror descent.
In \cref{sec:sm}, we broaden the applicability of the Split-Merge strategy by extending it to the more general GEP setting.
\Cref{sec:simulation} evaluates the effectiveness of the proposed algorithms through comprehensive experiments on both synthetic and real-world datasets.
Finally, \cref{sec:conclusion} concludes the paper and outlines potential directions for future research.

\noindent \textbf{Notation}.
$\boldsymbol{A}$ denotes a matrix, $\boldsymbol{a}$ a vector, and $a$ a scalar.
$\boldsymbol{A}^T$ and $\boldsymbol{A}^{-1}$ represent the transpose and inverse of $\boldsymbol{A}$, respectively.
$\boldsymbol{A} \succ 0$ indicates that $\boldsymbol{A}$ is PD, and $\boldsymbol{A} \succeq 0$ indicates that $\boldsymbol{A}$ is PSD.
$\|\boldsymbol{a}\|$ denotes the $\ell_2$-norm of $\boldsymbol{a}$.
$\operatorname{diag}(\boldsymbol{a})$ represents the diagonal matrix with the elements of $\boldsymbol{a}$ on its diagonal.

\section{Preliminaries}\label{sec:preliminary}

In this work, we address the $1$-GEP of the matrix pair $(\boldsymbol{A}, \boldsymbol{B})$ by solving the following unconstrained optimization problem:
\begin{equation}
	\min_{\boldsymbol{x} \in \mathbb{R}^n} \boldsymbol{x}^T\boldsymbol{B}\boldsymbol{x} - \left(\boldsymbol{x}^T \boldsymbol{A} \boldsymbol{x}\right)^{\frac{1}{2}},
	\label{eq:gep_difference}
\end{equation}
where $\boldsymbol{A}$ is symmetric PSD and $\boldsymbol{B}$ is symmetric PD.

\begin{remark}
	The assumption that $\boldsymbol{A}$ is PSD is without loss of generality, as we can shift $\boldsymbol{A}$ by adding $\eta \boldsymbol{B}$ for a sufficiently large $\eta>0$, ensuring $\boldsymbol{A} + \eta \boldsymbol{B}$ is PSD.
\end{remark}

\begin{remark}
	For the general $k$-GEP, the solution can be obtained by iteratively performing the $1$-GEP $k$ times using deflation techniques \cite{allen2017doubly}. Details are provided in \cref{appendix:deflation}.
\end{remark}

Let $f(\boldsymbol{x})$ denote the objective function of problem \cref{eq:gep_difference}. Using elementary calculus, we derive its gradient and Hessian as
\begin{equation}
	\nabla f\left(\boldsymbol{x}\right) = 2 \boldsymbol{B}\boldsymbol{x} - \frac{\boldsymbol{Ax}}{\left(\boldsymbol{x}^T\boldsymbol{A}\boldsymbol{x}\right)^{\frac{1}{2}}},
	\label{eq:grad_f}
\end{equation}
and
\begin{equation}
	\nabla^2 f\left(\boldsymbol{x}\right) = 2 \boldsymbol{B} - \frac{\boldsymbol{A}}{\left(\boldsymbol{x}^T\boldsymbol{A}\boldsymbol{x}\right)^{\frac{1}{2}}} + \frac{\left(\boldsymbol{Ax}\right) \left(\boldsymbol{Ax}\right)^T}{\left(\boldsymbol{x}^T\boldsymbol{A}\boldsymbol{x}\right)^{\frac{3}{2}}},
	\label{eq:hesse_f}
\end{equation}
respectively.

\begin{remark}
	The quadratic polynomial function $f(\boldsymbol{x})$ is differentiable on the set $\Theta = \left\{ \boldsymbol{x}: \boldsymbol{A}\boldsymbol{x} \neq \boldsymbol{0} \right\}$.
	While this differentiability has led previous works \cite{auchmuty1991globally,mongeau2004computing} to adopt its smooth quartic polynomial counterpart, such approaches fail to fully exploit the intrinsic structure of the objective function.
	In contrast, the function $f(\boldsymbol{x})$ exhibits several distinctive and computationally favorable properties that its smooth counterpart fails to preserve. These intrinsic features, which we will rigorously analyze in subsequent sections, offer significant benefits for both theoretical understanding and algorithmic design.
\end{remark}

Similar to the analysis in \cite{auchmuty1991globally}, we present the optimality characterization of problem \cref{eq:gep_difference}:
\begin{theorem} \label{thm:optimality}
	(i) All stationary points of the function $f\left(\boldsymbol{x}\right)$ are eigenvectors of the matrix pair $(\boldsymbol{A}, \boldsymbol{B})$. Moreover, the eigenvalue associated with any stationary point $\boldsymbol{x}$ is given by $\lambda\left(\boldsymbol{x}\right) = 2 \left(\boldsymbol{x}^T \boldsymbol{A} \boldsymbol{x}\right)^{\frac{1}{2}}$.
	
	(ii) The global minimizers of the optimization problem in \cref{eq:gep_difference} are the eigenvectors corresponding to the largest eigenvalue $\lambda_1$, and the corresponding minimum value is $-\frac{\lambda_1}{4}$.
	
	(iii) All second-order stationary points of the optimization problem in \cref{eq:gep_difference} are global minima. In particular, every local minimum is also a global minimum. Equivalently, all eigenvectors of the matrix pair $(\boldsymbol{A}, \boldsymbol{B})$ other than those associated with the dominant eigenvalue $\lambda_1$ are strict saddle points.
\end{theorem}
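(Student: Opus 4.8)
The plan is to handle the three parts in sequence, using the explicit gradient \cref{eq:grad_f} and Hessian \cref{eq:hesse_f} together with the simultaneous diagonalization of the pencil $(\boldsymbol{A},\boldsymbol{B})$. Since $\boldsymbol{B}\succ 0$ and $\boldsymbol{A}$ is symmetric, there exists a basis $\{\boldsymbol{u}_j\}_{j=1}^n$ that is $\boldsymbol{B}$-orthonormal, $\boldsymbol{u}_i^T\boldsymbol{B}\boldsymbol{u}_j=\delta_{ij}$, and $\boldsymbol{A}$-diagonal, $\boldsymbol{u}_i^T\boldsymbol{A}\boldsymbol{u}_j=\lambda_i\delta_{ij}$, with $\lambda_1\ge\cdots\ge\lambda_n\ge 0$ (nonnegativity from $\boldsymbol{A}\succeq 0$). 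I will expand arbitrary test directions in this basis to read off the relevant quadratic forms. For part (i), I would set $\nabla f(\boldsymbol{x})=\boldsymbol{0}$ in \cref{eq:grad_f}; noting that $f$ is differentiable exactly on $\Theta=\{\boldsymbol{x}:\boldsymbol{Ax}\neq\boldsymbol{0}\}$ (for PSD $\boldsymbol{A}$, $\boldsymbol{x}^T\boldsymbol{Ax}=0\Leftrightarrow\boldsymbol{Ax}=\boldsymbol{0}$), every stationary point lies in $\Theta$ and satisfies $\boldsymbol{Ax}=2(\boldsymbol{x}^T\boldsymbol{Ax})^{1/2}\boldsymbol{Bx}$, i.e.\ the eigenrelation $\boldsymbol{Ax}=\lambda(\boldsymbol{x})\boldsymbol{Bx}$ with $\lambda(\boldsymbol{x})=2(\boldsymbol{x}^T\boldsymbol{Ax})^{1/2}$.

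For part (ii), I would first evaluate $f$ at a stationary point: from $\boldsymbol{x}^T\boldsymbol{Ax}=\lambda^2/4$ and $\boldsymbol{x}^T\boldsymbol{Ax}=\lambda\,\boldsymbol{x}^T\boldsymbol{Bx}$ one gets $\boldsymbol{x}^T\boldsymbol{Bx}=\lambda/4$, whence $f(\boldsymbol{x})=\lambda/4-\lambda/2=-\lambda/4$. Thus the value at every stationary point equals $-\lambda/4$, minimized by taking $\lambda=\lambda_1$. To certify these are the genuine global minimizers I would invoke coercivity: as $\|\boldsymbol{x}\|\to\infty$ the quadratic term $\boldsymbol{x}^T\boldsymbol{Bx}$ grows like $\|\boldsymbol{x}\|^2$ and dominates the linear-growth term $(\boldsymbol{x}^T\boldsymbol{Ax})^{1/2}$, so $f$ is coercive and attains its infimum. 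Since $f\ge 0$ on the nonsmooth set $\{\boldsymbol{Ax}=\boldsymbol{0}\}$ while the candidate value $-\lambda_1/4<0$, any global minimizer must lie in $\Theta$ and hence be a stationary point, giving minimum value $-\lambda_1/4$ attained exactly at the $\lambda_1$-eigenvectors.

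For part (iii), I would substitute $(\boldsymbol{x}^T\boldsymbol{Ax})^{1/2}=\lambda/2$ and $\boldsymbol{Ax}=\lambda\boldsymbol{Bx}$ into \cref{eq:hesse_f} to obtain the simplified Hessian $\nabla^2 f(\boldsymbol{x})=2\boldsymbol{B}-\tfrac{2}{\lambda}\boldsymbol{A}+\tfrac{8}{\lambda}(\boldsymbol{Bx})(\boldsymbol{Bx})^T$ valid at any stationary point. If $\lambda<\lambda_1$, then $\boldsymbol{x}$ lies in the $\lambda$-eigenspace, which is $\boldsymbol{B}$-orthogonal to $\boldsymbol{u}_1$, so testing against $\boldsymbol{u}_1$ kills the rank-one term and yields $\boldsymbol{u}_1^T\nabla^2 f(\boldsymbol{x})\boldsymbol{u}_1=2\bigl(1-\lambda_1/\lambda\bigr)<0$, certifying a strict saddle. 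If $\lambda=\lambda_1$, expanding a generic direction $\boldsymbol{v}=\sum_j a_j\boldsymbol{u}_j$ gives $\boldsymbol{v}^T\nabla^2 f(\boldsymbol{x})\boldsymbol{v}=\sum_j 2\bigl(1-\lambda_j/\lambda_1\bigr)a_j^2+\tfrac{8}{\lambda_1}(\boldsymbol{x}^T\boldsymbol{B}\boldsymbol{v})^2\ge 0$, so the Hessian is PSD. Combining both cases, the second-order stationary points are precisely the $\lambda_1$-eigenvectors, which by part (ii) are global minima, while every other eigenvector is a strict saddle.

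The main obstacle I anticipate is the bookkeeping in part (iii), particularly the degenerate case where $\lambda_1$ is repeated: I must confirm the PSD conclusion persists when several $a_j$ with $\lambda_j=\lambda_1$ are nonzero, where the coefficients $2(1-\lambda_j/\lambda_1)$ vanish but the nonnegative rank-one term preserves semidefiniteness (the Hessian merely degenerating along the $\lambda_1$-eigenspace, consistent with the whole eigenspace being global minima). A secondary subtlety is the nonsmoothness on $\{\boldsymbol{Ax}=\boldsymbol{0}\}$, which the coercivity-and-sign argument in part (ii) disposes of cleanly by ruling those points out as minimizers.
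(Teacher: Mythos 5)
Your proposal is correct, and it is worth noting that the paper itself supplies no proof of \cref{thm:optimality}: the theorem is stated with a pointer to Auchmuty's variational-principles analysis, so your argument is precisely the self-contained verification the paper omits, extended from Auchmuty's $\boldsymbol{B}=\boldsymbol{I}$ setting to a general PD $\boldsymbol{B}$ via the simultaneous diagonalization $\boldsymbol{u}_i^T\boldsymbol{B}\boldsymbol{u}_j=\delta_{ij}$, $\boldsymbol{u}_i^T\boldsymbol{A}\boldsymbol{u}_j=\lambda_i\delta_{ij}$. All the computations check out: stationarity of \cref{eq:grad_f} gives $\boldsymbol{A}\boldsymbol{x}=2(\boldsymbol{x}^T\boldsymbol{A}\boldsymbol{x})^{1/2}\boldsymbol{B}\boldsymbol{x}$; the induced normalization $\boldsymbol{x}^T\boldsymbol{B}\boldsymbol{x}=\lambda/4$ and critical value $-\lambda/4$ follow; your simplified Hessian $\nabla^2 f(\boldsymbol{x})=2\boldsymbol{B}-\tfrac{2}{\lambda}\boldsymbol{A}+\tfrac{8}{\lambda}(\boldsymbol{B}\boldsymbol{x})(\boldsymbol{B}\boldsymbol{x})^T$ agrees with \cref{eq:hesse_f} at a stationary point; the test direction $\boldsymbol{u}_1$ correctly exploits the $\boldsymbol{B}$-orthogonality of distinct eigenspaces to kill the rank-one term and certify $2(1-\lambda_1/\lambda)<0$ when $0<\lambda<\lambda_1$ (note every smooth stationary point automatically has $\lambda>0$, so the eigenvalue-$0$ eigenvectors, which lie outside $\Theta$, never enter the second-order analysis); and your treatment of a repeated $\lambda_1$ is right, since the vanishing coefficients $2(1-\lambda_j/\lambda_1)a_j^2$ only degrade the Hessian to PSD along the $\lambda_1$-eigenspace, consistent with that whole eigenspace being global minima. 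You should state explicitly the standing assumption $\lambda_1>0$ (i.e.\ $\boldsymbol{A}\neq\boldsymbol{0}$), which your sign comparison $-\lambda_1/4<0$ requires.

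The one loose end is in part (iii)'s claim that every local minimum is global. Your coercivity-and-sign argument excludes the nonsmooth set $\{\boldsymbol{x}:\boldsymbol{A}\boldsymbol{x}=\boldsymbol{0}\}$ from the \emph{global} minimizers, and your Hessian analysis handles local minima in $\Theta$ (they are second-order stationary, hence $\lambda=\lambda_1$ points, hence global by part (ii)); but nothing you wrote rules out a local, non-global minimum sitting on the nonsmooth set, where the second-order test is unavailable. The patch is one line: if $\boldsymbol{A}\boldsymbol{x}_0=\boldsymbol{0}$ then $\boldsymbol{F}\boldsymbol{x}_0=\boldsymbol{0}$, so with $\boldsymbol{u}_1^T\boldsymbol{B}\boldsymbol{u}_1=1$ one gets $f(\boldsymbol{x}_0+t\boldsymbol{u}_1)=f(\boldsymbol{x}_0)+2t\,\boldsymbol{x}_0^T\boldsymbol{B}\boldsymbol{u}_1+t^2-|t|\sqrt{\lambda_1}$, and choosing the sign of $t$ to make the linear term nonpositive yields $f(\boldsymbol{x}_0+t\boldsymbol{u}_1)<f(\boldsymbol{x}_0)$ for all sufficiently small $|t|\neq 0$; hence no point of the nonsmooth set is even a local minimizer, and the claim follows. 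With that sentence added, your proof of \cref{thm:optimality} is complete as stated for problem \cref{eq:gep_difference}.
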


One notable advantage of the function $f(\boldsymbol{x})$, in contrast to its smooth quartic polynomial counterpart, is the existence of a positive Lipschitz constant \cite{extending2020}. Specifically, there exists a constant $L_{+} > 0$ such that  
\begin{equation}
	\max_{1\leq j \leq n} \max\left(\lambda_j\left(\boldsymbol{x}\right), 0\right) \leq L_{+},\ \forall \boldsymbol{x},
	\label{eq:pos_lip}
\end{equation}
where $\lambda_j\left(\boldsymbol{x}\right)$ denotes the $j$-th eigenvalue of the Hessian $\nabla^2 f\left(\boldsymbol{x}\right)$.

This property is critical for establishing the convergence of first-order optimization methods without line search.

Notably, condition \cref{eq:pos_lip} is equivalent to requiring that there exists a constant $L_{+} > 0$ such that
\begin{equation}
	\nabla^2 f(\boldsymbol{x}) \preceq L_+ \boldsymbol{I},\ \forall \boldsymbol{x},
	\label{eq:hessian_pos_lip}
\end{equation}
which bounds only the positive curvature of the function. This is strictly weaker than the standard Lipschitz gradient assumption, which requires a two-sided bound:
$$
-L \boldsymbol{I} \preceq \nabla^2 f(\boldsymbol{x}) \preceq L \boldsymbol{I},\ \forall \boldsymbol{x}.
$$
This distinction is significant because the positive curvature constant $L_+$ of our objective function $f(\boldsymbol{x})$ is much easier to estimate or compute, whereas general nonconvex functions may not possess such a structure.

\begin{remark}
	Although prior studies \cite{auchmuty1991globally,mongeau2004computing} have employed descent methods (e.g., steepest descent and conjugate gradient methods) to optimize quartic polynomial objective functions and have established global convergence guarantees, the lack of a positive Lipschitz constant for such functions necessitates the use of exact line search strategies. These line-search procedures incur additional computational overhead, which is often undesirable in practice.
\end{remark}

In the following, we identify a fundamental gap between first-order optimization methods and the classical power method for solving GEP. To bridge this gap, we propose a transform-domain framework that enables a principled acceleration of the power method.

\section{Transform-Domain Framework with Preconditioning} \label{sec:Transform}

\subsection{Motivation: Bridging First-Order Optimization and the Power Method} \label{sec:motivation_pmd}

The gradient descent (GD) method for solving problem \cref{eq:gep_difference} follows the standard update rule:
\begin{equation}
	\boldsymbol{x}_{k+1} = \boldsymbol{x}_{k} - \alpha \nabla f(\boldsymbol{x}_k), \label{eq:gd_gep}
\end{equation}
where $\alpha$ denotes the stepsize and $k$ is the iteration index.

When the stepsize satisfies the positive Lipschitz condition:
\begin{equation}
	\alpha L_{+} \in (0,2),
	\label{eq:positive_lip}
\end{equation}
one can establish the classical descent property of GD methods. This result follows from the fact that the objective function has bounded positive curvature, as formalized in the following lemma:
\begin{lemma} \label{lemma:positive_lipschitz}
	Let $f:\mathbb{R}^n\rightarrow\mathbb{R}$ be continuously differentiable, and suppose there exists a constant $L_+ > 0$ such that condition \cref{eq:hessian_pos_lip} holds. Then, for all $\boldsymbol{x},\boldsymbol{y}\in\mathbb{R}^n$, the following inequality is satisfied:
	\begin{equation}
		f(\boldsymbol{y}) \leq f(\boldsymbol{x}) + \nabla f(\boldsymbol{x})^T (\boldsymbol{y} - \boldsymbol{x}) + \frac{L_+}{2} \|\boldsymbol{y} - \boldsymbol{x}\|^2.
		\label{eq:upper_lip}
	\end{equation}
\end{lemma}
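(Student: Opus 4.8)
The plan is to reduce the multivariate inequality \cref{eq:upper_lip} to a one-dimensional statement along the segment joining $\boldsymbol{x}$ and $\boldsymbol{y}$, and then to extract the bound from the one-sided curvature condition \cref{eq:hessian_pos_lip} via a concavity argument. Concretely, I would fix $\boldsymbol{x},\boldsymbol{y}\in\mathbb{R}^n$, set $\boldsymbol{d}=\boldsymbol{y}-\boldsymbol{x}$, and define the scalar function $g(t)=f(\boldsymbol{x}+t\boldsymbol{d})$ for $t\in[0,1]$. Since $f$ is differentiable and admits the Hessian appearing in \cref{eq:hessian_pos_lip}, the function $g$ is twice differentiable with $g'(t)=\nabla f(\boldsymbol{x}+t\boldsymbol{d})^T\boldsymbol{d}$ and $g''(t)=\boldsymbol{d}^T\nabla^2 f(\boldsymbol{x}+t\boldsymbol{d})\,\boldsymbol{d}$. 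In this notation the target inequality is exactly $g(1)\leq g(0)+g'(0)+\tfrac{L_+}{2}\|\boldsymbol{d}\|^2$.

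The second step is to absorb the quadratic term by shifting. I would introduce $h(t)=g(t)-\tfrac{L_+}{2}\|\boldsymbol{d}\|^2\,t^2$. Differentiating twice and invoking \emph{only} the upper bound \cref{eq:hessian_pos_lip}, namely $\boldsymbol{d}^T\nabla^2 f\,\boldsymbol{d}\leq L_+\|\boldsymbol{d}\|^2$, yields $h''(t)=g''(t)-L_+\|\boldsymbol{d}\|^2\leq 0$ for every $t\in[0,1]$. A twice-differentiable scalar function with nonpositive second derivative is concave: applying the mean value theorem to $h'$ shows that $h'$ is non-increasing, so $h$ lies below each of its tangent lines. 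The tangent-line inequality at $t=0$ then reads $h(1)\leq h(0)+h'(0)$. Unwinding the definitions finishes the argument, since $h(1)=f(\boldsymbol{y})-\tfrac{L_+}{2}\|\boldsymbol{d}\|^2$, $h(0)=f(\boldsymbol{x})$, and $h'(0)=\nabla f(\boldsymbol{x})^T\boldsymbol{d}$, which rearranges to precisely \cref{eq:upper_lip}.

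Because the algebra is routine, the substantive point is not the computation but the careful use of the hypothesis: the entire argument must rely solely on the upper curvature bound $\nabla^2 f\preceq L_+\boldsymbol{I}$ and never on a matching lower bound. The shifting device $h(t)=g(t)-\tfrac{L_+}{2}\|\boldsymbol{d}\|^2 t^2$ is exactly what isolates this one-sidedness, converting the statement ``bounded positive curvature'' into genuine concavity of $h$. This is what makes \cref{eq:upper_lip} hold under the weaker assumption \cref{eq:hessian_pos_lip} rather than the full Lipschitz-gradient condition, matching the distinction emphasized in the surrounding discussion.

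A secondary technical point worth flagging is regularity. The proof needs only that $g$ be twice differentiable along the segment with $g''\leq L_+\|\boldsymbol{d}\|^2$; it requires neither continuity of the Hessian (no $C^2$ assumption) nor any integrability of $g''$, since the concavity conclusion follows from the pointwise sign of $h''$ through the mean value theorem rather than through integration. I therefore expect no genuine obstacle here, only the discipline of tracking that the weaker, one-sided assumption is the only ingredient used.
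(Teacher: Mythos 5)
Your proof is correct, and it takes a genuinely different route from the paper's. The paper (Appendix B) proves \cref{eq:upper_lip} via the exact second-order Taylor expansion with integral remainder, $f(\boldsymbol{y}) = f(\boldsymbol{x}) + \nabla f(\boldsymbol{x})^T(\boldsymbol{y}-\boldsymbol{x}) + \int_0^1 (1-t)\,(\boldsymbol{y}-\boldsymbol{x})^T \nabla^2 f\left(\boldsymbol{x}+t(\boldsymbol{y}-\boldsymbol{x})\right)(\boldsymbol{y}-\boldsymbol{x})\,dt$, bounding the integrand pointwise by $L_+\|\boldsymbol{y}-\boldsymbol{x}\|^2(1-t)$ using \cref{eq:hessian_pos_lip} and integrating to produce the factor $\tfrac{L_+}{2}$; you instead restrict to the segment, subtract the quadratic $\tfrac{L_+}{2}\|\boldsymbol{d}\|^2 t^2$, and conclude from $h''\leq 0$ that the shifted function lies below its tangent at $t=0$, which unwinds to the same inequality. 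Both arguments use only the one-sided curvature bound, so the conceptual content is identical; the difference is in regularity bookkeeping, and here your version is slightly tighter than the paper's. The integral-remainder form of Taylor's theorem tacitly requires $t\mapsto \boldsymbol{d}^T\nabla^2 f(\boldsymbol{x}+t\boldsymbol{d})\,\boldsymbol{d}$ to be integrable along the segment, which is standardly justified by $f\in C^2$ --- a hypothesis the lemma does not literally state (it asserts only continuous differentiability plus \cref{eq:hessian_pos_lip}); your mean-value-theorem route needs only pointwise second differentiability, exactly as you flag, at the cost of the integral proof's one-line brevity. One small polish: rather than arguing generically that $h'$ non-increasing puts $h$ below all its tangent lines, the cleanest closing step is a single application of the mean value theorem, $h(1)-h(0)=h'(\xi)\leq h'(0)$ for some $\xi\in(0,1)$, which is evidently what your argument reduces to.
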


\begin{proof}
	See \cref{appendix:proof_positive_lipschitz}.
\end{proof}

By applying the inequality \cref{eq:upper_lip}, we naturally arrive at the following result.
\begin{lemma} [Descent Lemma with Bounded Positive Curvature] \label{lemma:descent}
	Suppose $f:\mathbb{R}^n\rightarrow\mathbb{R}$ is continuously differentiable, and its Hessian satisfies the positive curvature bound given in \cref{eq:hessian_pos_lip} for some constant $L_+ > 0$. Then, for any stepsize $\alpha \in (0, 2/L_+)$, the GD update $\bar{\boldsymbol{x}} = \boldsymbol{x} - \alpha \nabla f(\boldsymbol{x})$ ensures a sufficient decrease in the objective:
	$$
	f(\bar{\boldsymbol{x}}) \leq f(\boldsymbol{x}) - \alpha \left(1 - \frac{\alpha L_+}{2}\right) \|\nabla f(\boldsymbol{x})\|^2.
	$$
	In particular, the function value is non-increasing at each iteration: $f(\bar{\boldsymbol{x}}) \leq f(\boldsymbol{x})$.
\end{lemma}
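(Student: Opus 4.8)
The plan is to obtain the descent inequality as a direct specialization of the positive-curvature bound established in \cref{lemma:positive_lipschitz}. The key observation is that the inequality \cref{eq:upper_lip} already encodes everything needed: it is a one-sided quadratic upper model of $f$ that holds globally under only the assumption $\nabla^2 f(\boldsymbol{x}) \preceq L_+ \boldsymbol{I}$, with no lower curvature bound required. I would therefore avoid reintroducing any two-sided Lipschitz condition and work entirely from \cref{eq:upper_lip}.

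First I would apply \cref{eq:upper_lip} with $\boldsymbol{y} = \bar{\boldsymbol{x}} = \boldsymbol{x} - \alpha \nabla f(\boldsymbol{x})$, so that the displacement is exactly $\boldsymbol{y} - \boldsymbol{x} = -\alpha \nabla f(\boldsymbol{x})$. The linear term then collapses to $\nabla f(\boldsymbol{x})^T(\boldsymbol{y}-\boldsymbol{x}) = -\alpha \|\nabla f(\boldsymbol{x})\|^2$, and the quadratic term becomes $\frac{L_+}{2}\|\boldsymbol{y}-\boldsymbol{x}\|^2 = \frac{L_+\alpha^2}{2}\|\nabla f(\boldsymbol{x})\|^2$. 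Collecting these two contributions yields $f(\bar{\boldsymbol{x}}) \le f(\boldsymbol{x}) - \alpha\bigl(1 - \frac{\alpha L_+}{2}\bigr)\|\nabla f(\boldsymbol{x})\|^2$, which is precisely the claimed sufficient-decrease estimate.

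The final step is to verify the sign of the prefactor to conclude monotonicity. The stepsize restriction $\alpha \in (0, 2/L_+)$ is equivalent to $\alpha L_+ < 2$, hence $1 - \frac{\alpha L_+}{2} > 0$; combined with $\alpha > 0$ this makes the coefficient $\alpha\bigl(1 - \frac{\alpha L_+}{2}\bigr)$ strictly positive, and since $\|\nabla f(\boldsymbol{x})\|^2 \ge 0$ the subtracted quantity is nonnegative, giving $f(\bar{\boldsymbol{x}}) \le f(\boldsymbol{x})$. I do not expect any genuine obstacle here: all the analytic content is absorbed into \cref{lemma:positive_lipschitz}, and the only point requiring care is conceptual rather than technical, namely ensuring the argument invokes solely the upper model \cref{eq:upper_lip}, so that the result remains valid for our nonconvex objective, which possesses the one-sided bound \cref{eq:hessian_pos_lip} but not a uniform two-sided Lipschitz gradient.
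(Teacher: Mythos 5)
Your proposal is correct and coincides with the paper's own proof: both apply \cref{eq:upper_lip} from \cref{lemma:positive_lipschitz} with $\boldsymbol{y} = \bar{\boldsymbol{x}} = \boldsymbol{x} - \alpha \nabla f(\boldsymbol{x})$, collect the linear and quadratic terms into $-\alpha\left(1 - \frac{\alpha L_+}{2}\right)\|\nabla f(\boldsymbol{x})\|^2$, and conclude monotonicity from the positivity of the prefactor under $\alpha \in (0, 2/L_+)$. Your additional remark that only the one-sided bound \cref{eq:hessian_pos_lip} is invoked, never a two-sided Lipschitz condition, is exactly the point the paper emphasizes.
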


\begin{proof}
	See \cref{appendix:proof_descent}.
\end{proof} 

Since the objective function $f$ in \cref{eq:gep_difference} is coercive, it is therefore bounded below. By leveraging \cref{lemma:descent}, we can further conclude that the sequence $\left\{\boldsymbol{x}_k\right\}$ generated by the GD scheme with a stepsize satisfying condition \cref{eq:positive_lip} converges to a stationary point of problem \cref{eq:gep_difference}; see the following theorem for a formal statement.

\begin{theorem} [Convergence to Stationary Points] \label{thm:convergence_stationary_point}
	let $f:\mathbb{R}^n\rightarrow\mathbb{R}$ be a continuously differentiable function whose Hessian satisfies the positive curvature bound given in \cref{eq:hessian_pos_lip} for some constant $L_+ > 0$. Suppose further that $f$ is bounded below. Let the sequence $\left\{\boldsymbol{x}_k\right\}$ be generated by the GD iteration:
	$$
	\boldsymbol{x}_{k+1} = \boldsymbol{x}_k - \alpha \nabla f(\boldsymbol{x}_k),
	$$
	with a fixed stepsize $\alpha \in (0, 2/L_+)$. Then the sequence of gradient norms converges to zero:
	$$
	\lim_{k\rightarrow \infty} \|f(\boldsymbol{x}_k)\| = 0.
	$$
\end{theorem}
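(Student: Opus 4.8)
The plan is to run a standard telescoping (summation) argument built directly on top of \cref{lemma:descent}. The Descent Lemma already supplies the per-iteration decrease, so the work here is purely to aggregate these inequalities and extract convergence of the gradient norms from boundedness below.

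First I would apply \cref{lemma:descent} at each iterate $\boldsymbol{x}_k$ with the fixed stepsize $\alpha$, yielding
\begin{equation*}
	f(\boldsymbol{x}_{k+1}) \leq f(\boldsymbol{x}_k) - \alpha\left(1 - \frac{\alpha L_+}{2}\right)\|\nabla f(\boldsymbol{x}_k)\|^2.
\end{equation*}
The key preliminary observation is that the constant $c := \alpha\left(1 - \alpha L_+/2\right)$ is \emph{strictly positive}: the hypothesis $\alpha \in (0, 2/L_+)$ gives both $\alpha > 0$ and $\alpha L_+ < 2$, so that $1 - \alpha L_+/2 > 0$. Rearranging the displayed inequality then produces a clean lower bound on the decrease, namely $c\,\|\nabla f(\boldsymbol{x}_k)\|^2 \leq f(\boldsymbol{x}_k) - f(\boldsymbol{x}_{k+1})$.

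Next I would sum this bound over $k = 0, 1, \ldots, N-1$. The right-hand side telescopes to $f(\boldsymbol{x}_0) - f(\boldsymbol{x}_N)$, and since $f$ is bounded below by some $f_{\inf} > -\infty$, this is in turn bounded above by $f(\boldsymbol{x}_0) - f_{\inf}$, uniformly in $N$. Dividing by $c > 0$ shows that the partial sums $\sum_{k=0}^{N-1}\|\nabla f(\boldsymbol{x}_k)\|^2$ are bounded above by the finite constant $(f(\boldsymbol{x}_0) - f_{\inf})/c$ for every $N$. Being a monotone nondecreasing and bounded sequence, these partial sums converge, so the infinite series $\sum_{k=0}^{\infty}\|\nabla f(\boldsymbol{x}_k)\|^2$ is finite.

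Finally, convergence of a series forces its terms to vanish, so $\|\nabla f(\boldsymbol{x}_k)\|^2 \to 0$ and hence $\|\nabla f(\boldsymbol{x}_k)\| \to 0$ as $k \to \infty$, which is the claimed statement (correcting the evident typo $f$ for $\nabla f$ in the conclusion). I do not anticipate a genuine obstacle here; the argument is routine once the Descent Lemma is in hand. The only point requiring care is confirming positivity of the coefficient $c$ from the stepsize restriction, since without $\alpha L_+ < 2$ the per-step inequality would no longer certify a decrease and the telescoping bound would collapse.
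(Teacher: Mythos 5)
Your proposal is correct and follows essentially the same route as the paper's own proof: invoke \cref{lemma:descent} per iteration, define $c = \alpha\left(1 - \alpha L_+/2\right) > 0$, telescope the sum using boundedness below, and conclude that the summability of $\|\nabla f(\boldsymbol{x}_k)\|^2$ forces the gradient norms to vanish. Your explicit note on the typo ($\|f(\boldsymbol{x}_k)\|$ should read $\|\nabla f(\boldsymbol{x}_k)\|$) is a small improvement over the paper's presentation, which itself contains a minor slip writing $\sum_{k=0}^{\infty}\|\nabla f(\boldsymbol{x}_k)\|^2 \leq \infty$ where $< \infty$ is meant.
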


\begin{proof}
	See \cref{appendix:proof_convergence_stationary_point}.
\end{proof} 

A recent result in \cite{extending2020} demonstrates that when the stepsize satisfies the positive Lipschitz condition \cref{eq:positive_lip}, the GD algorithm also avoids convergence to strict saddle points, as established in the following theorem:
\begin{theorem} [Non-Convergence to Strict Saddle Points] \label{thm:gd_convergence}
	Let $f \in C^2(\Omega)$, where $\Omega$ is a forward invariant convex subset of $\mathbb{R}^n$, and suppose that the gradient of $f$ has a positive Lipschitz constant $L_{+}$. Let $\sigma \left(\cdot\right)$ denote the spectrum of a matrix. Consider the GD update $\bar{\boldsymbol{x}} = \boldsymbol{x} - \alpha \nabla f(\boldsymbol{x})$ with $\alpha L_{+} \in (0,2)$, and assume that the set 
	$$\left\{\boldsymbol{x} \in \Omega \mid \alpha^{-1} \in \sigma\left(\nabla^2 f(\boldsymbol{x})\right)\right\}$$	
	has measure zero and contains no saddle points. Then, for a uniformly random initialization in $\Omega$, the probability of GD converging to a strict saddle point is zero.
\end{theorem}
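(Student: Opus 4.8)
The plan is to recast the iteration as a discrete dynamical system and invoke the Center-Stable Manifold Theorem, following the standard strategy for proving that first-order methods avoid saddle points. Define the gradient map $g(\boldsymbol{x}) = \boldsymbol{x} - \alpha \nabla f(\boldsymbol{x})$, so that the GD recursion reads $\boldsymbol{x}_{k+1} = g(\boldsymbol{x}_k)$ and the fixed points of $g$ are exactly the critical points of $f$; among these, the strict saddles are the ones whose Hessian has a negative eigenvalue. Its Jacobian is $Dg(\boldsymbol{x}) = \boldsymbol{I} - \alpha \nabla^2 f(\boldsymbol{x})$, with eigenvalues $1 - \alpha \lambda_j(\boldsymbol{x})$. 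First I would record two consequences of the hypotheses. At any strict saddle $\boldsymbol{x}^*$ we have $\lambda_{\min}(\nabla^2 f(\boldsymbol{x}^*)) < 0$, so $Dg(\boldsymbol{x}^*)$ carries an eigenvalue $1 - \alpha\lambda_{\min} > 1$ lying outside the unit circle. Moreover, since the singular set $S = \{\boldsymbol{x}\in\Omega : \alpha^{-1}\in\sigma(\nabla^2 f(\boldsymbol{x}))\}$ contains no saddle points, $Dg$ is invertible at every strict saddle, so $g$ is a local diffeomorphism in a neighborhood of each such point.

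Next, for each strict saddle $\boldsymbol{x}^*$ I would apply the Center-Stable Manifold Theorem to $g$ at $\boldsymbol{x}^*$: because $Dg(\boldsymbol{x}^*)$ has at least one eigenvalue of modulus exceeding one, there is an open neighborhood $B_{\boldsymbol{x}^*}$ and an embedded $C^1$ manifold $W^{cs}_{\mathrm{loc}}(\boldsymbol{x}^*)$ of dimension strictly less than $n$ (hence Lebesgue-null) with the crucial trapping property: any point whose entire forward orbit under $g$ remains in $B_{\boldsymbol{x}^*}$ must lie on $W^{cs}_{\mathrm{loc}}(\boldsymbol{x}^*)$. The union $\bigcup_{\boldsymbol{x}^*} B_{\boldsymbol{x}^*}$ is open, so by the Lindel\"of property of $\mathbb{R}^n$ it admits a countable subcover $\{B_{\boldsymbol{x}_i^*}\}_{i\ge 1}$. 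I would then show that the set $W$ of initializations whose GD trajectory converges to some strict saddle satisfies $W \subseteq \bigcup_{i}\bigcup_{K\ge 0} g^{-K}\big(W^{cs}_{\mathrm{loc}}(\boldsymbol{x}_i^*)\big)$: if $g^k(\boldsymbol{x}_0)\to \boldsymbol{x}^*$, then $\boldsymbol{x}^*$ lies in some $B_{\boldsymbol{x}_i^*}$, so all sufficiently late iterates enter this open set, and applying the trapping property to the tail orbit places $g^K(\boldsymbol{x}_0)$ on $W^{cs}_{\mathrm{loc}}(\boldsymbol{x}_i^*)$ for some $K$.

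Finally, I would argue that this countable union is Lebesgue-null, whence the probability statement follows because a uniform initialization is absolutely continuous with respect to Lebesgue measure. Each $W^{cs}_{\mathrm{loc}}(\boldsymbol{x}_i^*)$ is null, so by countable subadditivity it suffices to prove that $g^{-1}$ maps null sets to null sets and then induct on $K$. The main obstacle lies precisely here: because $\alpha L_+$ may exceed $1$, the map $g$ need not be a global diffeomorphism, and the preimage of a null set under a merely $C^1$ map can fail to be null. This is exactly where the hypothesis that $S$ has measure zero becomes essential. I would decompose $g^{-1}(N) \subseteq \big(g^{-1}(N)\cap S^c\big)\cup S$; on the open set $S^c$ the Jacobian $Dg$ is nonsingular, so covering $S^c$ by countably many neighborhoods on which $g$ is injective shows that $g^{-1}(N)\cap S^c$ is null, while $S$ is null by assumption. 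The role of the bound $\alpha L_+ \in (0,2)$ is to ensure, via the descent lemma established earlier, that GD trajectories are well-behaved and that their only possible limits are fixed points, so the reduction to the stable-set analysis above is complete.
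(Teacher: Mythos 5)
The paper offers no proof of this theorem at all --- it is imported verbatim from \cite{extending2020} --- so there is no internal argument to compare against; your reconstruction is correct and is, in substance, exactly the proof of that reference: the Lee et al.\ center-stable manifold strategy (local diffeomorphism at each strict saddle since $S$ contains no saddles, an expanding eigenvalue $1-\alpha\lambda_{\min}>1$ of $Dg$, the trapping property of $W^{cs}_{\mathrm{loc}}$, a Lindel\"of cover, and countable subadditivity), combined with the decomposition $g^{-1}(N)\subseteq\bigl(g^{-1}(N)\cap S^{c}\bigr)\cup S$ to show that preimages of null sets remain null even though $g$ fails to be a global diffeomorphism when $\alpha L_{+}\in[1,2)$, which is precisely the point where the measure-zero hypothesis on $S$ enters. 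One small correction: the bound $\alpha L_{+}\in(0,2)$ is not actually needed for the avoidance statement itself --- an orbit that converges to a strict saddle is trapped by the local center-stable manifold regardless of descent --- it is used in the paper's surrounding results (\cref{lemma:descent}, \cref{thm:convergence_stationary_point}) to guarantee convergence to stationary points, so your closing sentence slightly overstates its role in this particular theorem.
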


Leveraging the fact that the objective function $f$ in \cref{eq:gep_difference} admits no non-strict saddle points (\cref{thm:optimality}(iii)), and invoking \cref{thm:convergence_stationary_point} and \cref{thm:gd_convergence}, we deduce that the GD method applied to \cref{eq:gep_difference} converges to a global minimizer of $f$ with probability one, under the stepsize condition specified in \cref{eq:positive_lip}.

\begin{theorem} [Convergence to Global Minimizer] \label{thm:global_convergence}
	Consider the objective function $f$ in problem \cref{eq:gep_difference}, and let $L_+ > 0$ denote its positive Lipschitz constant.
	Suppose the GD method is initialized at a point $\boldsymbol{x}_0 \in \mathbb{R}^n$ drawn uniformly at random, and uses a fixed stepsize $\alpha \equiv \alpha_0$, where $\alpha_0$ is sampled uniformly from the interval $(0,2/L_+)$. Then, the generated sequence $\left\{\boldsymbol{x}_k\right\}$ converges to a global minimizer of $f$ with probability one.
\end{theorem}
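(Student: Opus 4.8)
The plan is to combine the three facts already established for $f$: convergence of gradient descent to stationary points (\cref{thm:convergence_stationary_point}), the strict-saddle dichotomy of the landscape (\cref{thm:optimality}(iii)), and the saddle-avoidance guarantee (\cref{thm:gd_convergence}). The logical skeleton is: GD drives $\|\nabla f(\boldsymbol{x}_k)\|$ to zero, so every accumulation point of the (bounded) iterate sequence is a stationary point; by \cref{thm:optimality}(iii) every stationary point is either a global minimizer or a strict saddle; and \cref{thm:gd_convergence} rules out convergence to a strict saddle for almost every initialization. The role of the random stepsize $\alpha_0$ is precisely to certify, with probability one, the two technical hypotheses of \cref{thm:gd_convergence}.

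First I would fix the domain. Since shifting $\boldsymbol{A} \mapsto \boldsymbol{A} + \eta \boldsymbol{B}$ leaves the eigenvectors unchanged and only translates the spectrum, I may assume $\boldsymbol{A} \succ 0$, so that $\boldsymbol{x}^T \boldsymbol{A} \boldsymbol{x} > 0$ for all $\boldsymbol{x} \neq \boldsymbol{0}$ and $f \in C^2(\mathbb{R}^n \setminus \{\boldsymbol{0}\})$. Coercivity of $f$ yields bounded sublevel sets, and the descent property (\cref{lemma:descent}) makes each sublevel set $\{f \le f(\boldsymbol{x}_0)\}$ forward invariant under the GD map $T(\boldsymbol{x}) = \boldsymbol{x} - \alpha_0 \nabla f(\boldsymbol{x})$; I would take $\Omega$ to be an open convex neighborhood of such a compact sublevel set, verifying that the trajectory stays bounded away from the origin. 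This secures the standing assumptions ($f \in C^2(\Omega)$, bounded below, forward invariant) needed to invoke the earlier theorems and guarantees boundedness of $\{\boldsymbol{x}_k\}$.

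Next I would discharge the two hypotheses of \cref{thm:gd_convergence} for almost every $\alpha_0$. The stationary set consists of finitely many eigendirections of $(\boldsymbol{A},\boldsymbol{B})$ (each eigenvector pinned to a definite scale by $\lambda(\boldsymbol{x}) = 2(\boldsymbol{x}^T\boldsymbol{A}\boldsymbol{x})^{1/2}$), of which only the $\pm$ dominant ones are minima and the rest are strict saddles. At each such saddle the Hessian $\nabla^2 f$ has finitely many eigenvalues, so $\alpha_0^{-1}$ coincides with one of them for only finitely many values of $\alpha_0$; drawing $\alpha_0$ uniformly from $(0,2/L_+)$ therefore avoids this finite bad set with probability one, which yields the ``contains no saddle points'' condition. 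For the measure-zero condition, the set $\{\boldsymbol{x} : \alpha_0^{-1} \in \sigma(\nabla^2 f(\boldsymbol{x}))\}$ is a level set of the (real-analytic, non-constant) eigenvalue functions of $\nabla^2 f$ and hence has Lebesgue measure zero. With both hypotheses in force, \cref{thm:gd_convergence} gives, for each good $\alpha_0$ and uniformly random $\boldsymbol{x}_0$, zero probability of converging to a strict saddle; a Fubini argument over the pair $(\boldsymbol{x}_0,\alpha_0)$ then upgrades this to probability one over the joint randomization.

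Finally I would assemble the pieces: with probability one the bounded sequence $\{\boldsymbol{x}_k\}$ satisfies $\|\nabla f(\boldsymbol{x}_k)\| \to 0$ and has no strict saddle as an accumulation point, so every accumulation point is a global minimizer. To promote this to convergence to a single global minimizer I would use that the global minima are isolated when $\lambda_1$ is simple (the antipodal pair $\pm\boldsymbol{x}^{\star}$), forcing the connected limit set to be a singleton; in the degenerate case I would fall back on the Kurdyka--{\L}ojasiewicz inequality for the real-analytic $f$. I expect the main obstacle to be the rigorous verification of the two regularity hypotheses of \cref{thm:gd_convergence} — specifically, showing the eigenvalue level set is genuinely measure zero (ruling out any flat direction of $\nabla^2 f$) and handling the non-differentiability locus $\{\boldsymbol{x}^T\boldsymbol{A}\boldsymbol{x}=0\}$ so that $T$ is a well-defined local diffeomorphism on $\Omega$; the randomization of $\alpha_0$ is the crucial device that circumvents the ``no saddle'' part of these hypotheses.
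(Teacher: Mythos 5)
Your proposal follows essentially the same route as the paper, whose proof consists precisely of assembling \cref{thm:optimality}(iii) (every non-global stationary point is a strict saddle), \cref{thm:convergence_stationary_point} (gradient norms vanish), and \cref{thm:gd_convergence} (saddle avoidance), with the uniformly random stepsize $\alpha_0$ serving exactly the role you identify: guaranteeing, with probability one, the measure-zero/no-saddle hypothesis on $\left\{\boldsymbol{x} : \alpha_0^{-1} \in \sigma\left(\nabla^2 f(\boldsymbol{x})\right)\right\}$. Your extra details (the Fubini argument over $(\boldsymbol{x}_0,\alpha_0)$, the measure-zero verification, and upgrading subsequential to full-sequence convergence via isolated minima or the Kurdyka--{\L}ojasiewicz property) merely fill in steps the paper leaves implicit; the one caveat is that your ``WLOG $\boldsymbol{A}\succ 0$'' shift changes $f$ and hence the GD trajectory, so it is not a strict reduction --- though the nondifferentiability locus it is meant to handle is likewise left untreated by the paper.
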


\begin{remark}
	(i) Based on the Hessian structure in \cref{eq:hesse_f}, the constant $L_{+}$ can be set to $2\lambda_1(\boldsymbol{B})$, where $\lambda_1(\boldsymbol{B})$ denotes the largest eigenvalue of the matrix $\boldsymbol{B}$. This constant can be efficiently estimated via standard eigensolvers. Alternatively, one may approximate $L_{+}$ by the trace of $\boldsymbol{B}$, though this may lead to slower convergence.
	
	(ii) In practice, the stepsize $\alpha$ is sampled uniformly at random from the interval $[0.9\times2/L_+, 0.99\times2/L_+]$ to promote faster convergence.
	
	(iii) Unlike prior first-order methods \cite{auchmuty1991globally,mongeau2004computing}, our approach does not require additional line-search procedures. A fixed stepsize suffices, as long as it satisfies the condition in \cref{eq:positive_lip}. This is a key motivation for employing the objective function $f(\boldsymbol{x})$ in \cref{eq:gep_difference} rather than its smooth quartic counterpart.
\end{remark}

We can interpret first-order optimization algorithms from a more general MM perspective. Specifically, the GD update in \cref{eq:gd_gep} can be viewed as the solution to the following surrogate minimization problem:
\begin{equation}
	\boldsymbol{x}_{k+1} = \arg \min_{\boldsymbol{x} \in \mathbb{R}^n} f(\boldsymbol{x}_k) + \left\langle \nabla f(\boldsymbol{x}_k), \boldsymbol{x} - \boldsymbol{x}_k \right\rangle + \frac{1}{2\alpha}\left\| \boldsymbol{x} - \boldsymbol{x}_k  \right\|^2,
	\label{eq:gd_mm}
\end{equation}
where $\alpha$ satisfies the condition given in \cref{eq:positive_lip}. 
In particular, when $\alpha \in \left(0, {1}/{2\lambda_1(\boldsymbol{B})}\right)$, the objective in \cref{eq:gd_mm} serves as a global surrogate for the original function $f(\boldsymbol{x})$, because the following inequality holds:
$$
\frac{1}{\alpha} \boldsymbol{I} \succeq \nabla^2 f(\boldsymbol{x}),\ \forall \boldsymbol{x}.
$$

\begin{remark}
	The update in \cref{eq:gd_mm} only incorporates information about the dominant eigenvalue of the matrix $\boldsymbol{B}$. This partial spectral usage contributes to the slow convergence behavior of standard GD.
\end{remark}
To exploit the full spectral information of $\boldsymbol{B}$, consider the alternative update:
\begin{equation}
	\boldsymbol{x}_{k+1} = \arg \min_{\boldsymbol{x} \in \mathbb{R}^n} f_k(\boldsymbol{x}),
	\label{eq:pm_sur}
\end{equation}
where
$$
f_k(\boldsymbol{x}) = f(\boldsymbol{x}_k) + \left\langle \nabla f(\boldsymbol{x}_k), \boldsymbol{x} - \boldsymbol{x}_k \right\rangle + \left(\boldsymbol{x} - \boldsymbol{x}_k\right) \boldsymbol{B} \left(\boldsymbol{x} - \boldsymbol{x}_k\right)
$$
is a global quadratic surrogate function of $f(\boldsymbol{x})$ at $\boldsymbol{x}_k$. Notably, its Hessian satisfies:
$$
2\lambda_1(\boldsymbol{B}) \boldsymbol{I} \succeq \nabla^2 f_k(\boldsymbol{x}_k) = 2 \boldsymbol{B} \succeq \nabla^2 f(\boldsymbol{x}_k).
$$
\begin{remark}
	The solution to \cref{eq:pm_sur} corresponds to solving the following linear system:
	$$
	\boldsymbol{B}\boldsymbol{x}_{k+1} = \frac{\boldsymbol{Ax}_k}{2(\boldsymbol{x}_k^T \boldsymbol{A} \boldsymbol{x}_k)^{\frac{1}{2}}}.
	$$
	This iteration is equivalent to the power method for the GEP \cite{pm_gep_1992}, ignoring normalization.
\end{remark}

While GD benefits from a simple iterative structure, its performance relies on knowledge of the dominant eigenvalue of matrix $\boldsymbol{B}$ and it fails to exploit richer second-order information. In contrast, the power method takes advantage of the full spectral structure of $\boldsymbol{B}$, but at the cost of solving linear systems and only partially utilizing the Hessian of $f(\boldsymbol{x})$. These observations naturally raise the question: Can we design an approach that effectively bridges the gap between first-order optimization and the power method, balancing simplicity with richer curvature information? This question serves as the key motivation for our proposed transform-domain framework.

\subsection{Preconditioned Mirror Descent Approach} \label{sec:pmd}

When applying GD to solve the GEP, two major limitations arise:
\begin{enumerate}
	\item Compared to the power method, GD attempts to approximate the full spectral structure of the matrix $\boldsymbol{B}$ using only information about its dominant eigenvalue. This approximation is reasonable only when the spectrum of $\boldsymbol{B}$ is dense, i.e., when the condition number $\kappa_{\boldsymbol{B}} \approx 1$.
	\item The convergence of GD relies on knowledge of the dominant eigenvalue of $\boldsymbol{B}$, which is generally unavailable in practice.
\end{enumerate}
To mitigate these issues, we propose performing GD in a transformed domain using a suitable preconditioner $\boldsymbol{P}$.
Specifically, the goal is to find a preconditioner such that the transformed matrix $\boldsymbol{\tilde{B}} = \boldsymbol{P}^{-T} \boldsymbol{B} \boldsymbol{P}^{-1}$ has a lower condition number and a dominant eigenvalue that is easier to estimate.
We then perform GD in the transformed domain $\boldsymbol{y} = \boldsymbol{P}\boldsymbol{x}$, solving the following equivalent optimization problem:
\begin{equation}
	\min_{\boldsymbol{y} \in \mathbb{R}^n} \boldsymbol{y}^T\boldsymbol{\tilde{B}}\boldsymbol{y} - \left(\boldsymbol{y}^T \boldsymbol{\tilde{A}} \boldsymbol{y}\right)^{\frac{1}{2}},
	\label{eq:gep_difference_transform}
\end{equation}
where $\boldsymbol{\tilde{A}} = \boldsymbol{P}^{-T} \boldsymbol{A} \boldsymbol{P}^{-1}$.

The corresponding alternating iterative scheme is given by:
\begin{equation}
	\begin{aligned}
		& \boldsymbol{y}_{k} = \boldsymbol{P} \boldsymbol{x}_{k}, \\
		& \boldsymbol{y}_{k+1} = \boldsymbol{y}_{k} - \alpha \left(2 \boldsymbol{\tilde{B}}\boldsymbol{y}_{k} - \frac{\boldsymbol{\tilde{A}}\boldsymbol{y}_k}{\left(\boldsymbol{y}_k^T\boldsymbol{\tilde{A}}\boldsymbol{y}_k\right)^{\frac{1}{2}}}\right), \\
		& \boldsymbol{x}_{k+1} = \boldsymbol{P}^{-1} \boldsymbol{y}_{k+1}.
	\end{aligned}
	\label{eq:pmd}
\end{equation}
Here, the stepsize $\alpha$ only needs to satisfy the condition $\alpha \in \left(0, {1}/{\lambda_1(\boldsymbol{\tilde{B}})}\right)$ to ensure convergence, as guaranteed by \cref{thm:global_convergence}.

An interesting observation is that the iterative scheme in \cref{eq:pmd} is equivalent to a mirror descent strategy \cite{bubeck2015convex} in the primal space, which also motivates the naming of our approach as preconditioned mirror descent (PMD).

\begin{theorem}
	Given a preconditioner $\boldsymbol{P}$, the iterative scheme in \cref{eq:pmd} can be interpreted as a mirror descent strategy with the mirror map $\Phi(\boldsymbol{x}) = \frac{1}{2} \boldsymbol{x}^T \boldsymbol{P}^T \boldsymbol{P} \boldsymbol{x}$:
	\begin{equation}
		\boldsymbol{x}_{k+1} = \arg\min_{\boldsymbol{x} \in \mathbb{R}^n} \alpha \nabla f(\boldsymbol{x}_k)^T \boldsymbol{x} + D_{\Phi}(\boldsymbol{x},\boldsymbol{x}_k),
		\label{eq:thm_pmd}
	\end{equation}
	where $D_{\Phi}(\boldsymbol{x}, \boldsymbol{y})$ denotes the Bregman divergence associated with $\Phi$, defined by
	$$
	D_{\Phi}(\boldsymbol{x}, \boldsymbol{y}) = \Phi(\boldsymbol{x})-\Phi(\boldsymbol{y})-\nabla \Phi(\boldsymbol{y})^T(\boldsymbol{x}-\boldsymbol{y}).
	$$
\end{theorem}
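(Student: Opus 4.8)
The plan is to show that both the mirror descent update \cref{eq:thm_pmd} and the three-step scheme \cref{eq:pmd} reduce to the same preconditioned gradient step in the primal variable, namely
$$
\boldsymbol{x}_{k+1} = \boldsymbol{x}_k - \alpha \left(\boldsymbol{P}^T\boldsymbol{P}\right)^{-1} \nabla f(\boldsymbol{x}_k),
$$
and then conclude that the two formulations coincide.

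First I would analyze the right-hand side of \cref{eq:thm_pmd}. Since $\Phi(\boldsymbol{x}) = \frac{1}{2}\boldsymbol{x}^T\boldsymbol{P}^T\boldsymbol{P}\boldsymbol{x}$ has gradient $\nabla\Phi(\boldsymbol{x}) = \boldsymbol{P}^T\boldsymbol{P}\boldsymbol{x}$, the Bregman divergence satisfies $\nabla_{\boldsymbol{x}} D_{\Phi}(\boldsymbol{x},\boldsymbol{x}_k) = \nabla\Phi(\boldsymbol{x}) - \nabla\Phi(\boldsymbol{x}_k) = \boldsymbol{P}^T\boldsymbol{P}(\boldsymbol{x}-\boldsymbol{x}_k)$. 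Because $\boldsymbol{P}$ is invertible we have $\boldsymbol{P}^T\boldsymbol{P}\succ 0$, so $\Phi$ is strongly convex and the objective in \cref{eq:thm_pmd} is a strongly convex function of $\boldsymbol{x}$; hence its minimizer is unique and characterized by the stationarity condition. Setting the gradient to zero yields $\alpha\nabla f(\boldsymbol{x}_k) + \boldsymbol{P}^T\boldsymbol{P}(\boldsymbol{x}_{k+1}-\boldsymbol{x}_k) = \boldsymbol{0}$, which rearranges to the claimed primal step.

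Next I would eliminate the auxiliary variable $\boldsymbol{y}$ from \cref{eq:pmd}. The key observation is that the transformed objective in \cref{eq:gep_difference_transform}, which I denote $\tilde{f}(\boldsymbol{y})$, satisfies $\tilde{f}(\boldsymbol{y}) = f(\boldsymbol{P}^{-1}\boldsymbol{y})$, so by the chain rule $\nabla\tilde{f}(\boldsymbol{y}) = \boldsymbol{P}^{-T}\nabla f(\boldsymbol{P}^{-1}\boldsymbol{y})$. Substituting $\boldsymbol{y}_k = \boldsymbol{P}\boldsymbol{x}_k$ together with $\boldsymbol{\tilde{A}} = \boldsymbol{P}^{-T}\boldsymbol{A}\boldsymbol{P}^{-1}$ and $\boldsymbol{\tilde{B}} = \boldsymbol{P}^{-T}\boldsymbol{B}\boldsymbol{P}^{-1}$, I would verify directly that the vector in parentheses in the middle line of \cref{eq:pmd} is exactly $\nabla\tilde{f}(\boldsymbol{y}_k) = \boldsymbol{P}^{-T}\nabla f(\boldsymbol{x}_k)$; here one uses the form of $\nabla f$ in \cref{eq:grad_f} and the identity $\boldsymbol{y}_k^T\boldsymbol{\tilde{A}}\boldsymbol{y}_k = \boldsymbol{x}_k^T\boldsymbol{A}\boldsymbol{x}_k$. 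Thus $\boldsymbol{y}_{k+1} = \boldsymbol{y}_k - \alpha\boldsymbol{P}^{-T}\nabla f(\boldsymbol{x}_k)$, and applying $\boldsymbol{P}^{-1}$ with the identity $\boldsymbol{P}^{-1}\boldsymbol{P}^{-T} = (\boldsymbol{P}^T\boldsymbol{P})^{-1}$ gives $\boldsymbol{x}_{k+1} = \boldsymbol{x}_k - \alpha(\boldsymbol{P}^T\boldsymbol{P})^{-1}\nabla f(\boldsymbol{x}_k)$, the same step obtained above, which completes the argument.

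There is no deep obstacle here; the proof is essentially careful bookkeeping with the transpose--inverse algebra. The one point that deserves attention is recognizing that $\nabla^2\Phi = \boldsymbol{P}^T\boldsymbol{P}$ is precisely the matrix whose inverse plays the role of the preconditioner, so that the Bregman geometry induced by $\Phi$ exactly encodes the change of variables $\boldsymbol{y} = \boldsymbol{P}\boldsymbol{x}$. Making this correspondence explicit through the identity $\boldsymbol{P}^{-1}\boldsymbol{P}^{-T} = (\boldsymbol{P}^T\boldsymbol{P})^{-1}$ is what closes the loop between the primal-space mirror descent update and the transform-domain scheme.
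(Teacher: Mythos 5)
Your proof is correct and follows essentially the same route as the paper: both arguments reduce the transform-domain scheme and the mirror descent update to the common preconditioned step $\boldsymbol{x}_{k+1} = \boldsymbol{x}_k - \alpha\left(\boldsymbol{P}^T\boldsymbol{P}\right)^{-1}\nabla f(\boldsymbol{x}_k)$, using the identity $\boldsymbol{P}^{-1}\boldsymbol{P}^{-T} = \left(\boldsymbol{P}^T\boldsymbol{P}\right)^{-1}$ and the fact that $D_{\Phi}$ is the Mahalanobis quadratic. The only cosmetic differences are direction and explicitness: you derive the mirror descent minimizer via stationarity and strong convexity (a uniqueness point the paper leaves implicit) and identify the middle line of the scheme as $\nabla\tilde{f}(\boldsymbol{y}_k)$ by the chain rule, where the paper instead substitutes the update formulas directly and then recognizes the quadratic surrogate.
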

\begin{proof}
	It is straightforward to verify that the Bregman divergence induced by the mirror map $\Phi(\boldsymbol{x}) = \frac{1}{2} \boldsymbol{x}^T \boldsymbol{P}^T \boldsymbol{P} \boldsymbol{x}$ simplifies to the Mahalanobis distance:
	\begin{equation}
		D_{\Phi}(\boldsymbol{x}, \boldsymbol{y}) = \frac{1}{2} (\boldsymbol{x} - \boldsymbol{y})^T  \boldsymbol{P}^T \boldsymbol{P} (\boldsymbol{x} - \boldsymbol{y}).
		\label{eq:Mahalanobis}
	\end{equation}
	Now consider the update rule in \cref{eq:pmd}. Substituting its expression yields:
	$$
	\begin{aligned}
		\boldsymbol{x}_{k+1} & = \boldsymbol{P}^{-1}\left(\boldsymbol{P}\boldsymbol{x}_k - \alpha \left(2 \boldsymbol{P}^{-T}\boldsymbol{B}\boldsymbol{x}_{k} - \frac{\boldsymbol{P}^{-T}\boldsymbol{A}\boldsymbol{x}_k}{\left(\boldsymbol{x}_k^T\boldsymbol{A}\boldsymbol{x}_k\right)^{\frac{1}{2}}}\right)\right) \\
		& = \boldsymbol{x}_k - \alpha \left(\boldsymbol{P}^{T}\boldsymbol{P}\right)^{-1} \nabla f(\boldsymbol{x}_k).
	\end{aligned}
	$$
	This update is equivalent to:
	$$
	\boldsymbol{x}_{k+1} = \arg \min_{\boldsymbol{x} \in \mathbb{R}^n} \alpha \nabla f(\boldsymbol{x}_k)^T \boldsymbol{x} + \frac{1}{2}(\boldsymbol{x} - \boldsymbol{x}_k)^T \boldsymbol{P}^{T}\boldsymbol{P} (\boldsymbol{x} - \boldsymbol{x}_k).
	$$
	Substituting the expression for the Bregman divergence from \cref{eq:Mahalanobis} yields \cref{eq:thm_pmd}, completing the proof.
\end{proof}

Ideally, we aim to construct an optimal preconditioner $\boldsymbol{P}$ such that $\boldsymbol{\tilde{B}} = \boldsymbol{I}$. In this case, the condition number satisfies $\kappa_{\boldsymbol{\tilde{B}}} = 1$, and the dominant eigenvalue is exactly 1. The optimal preconditioner\footnote{The optimal preconditioner $\boldsymbol{P}$ is not unique. For example, one may also choose $\boldsymbol{P} = \boldsymbol{B}^{\frac{1}{2}}$, which, unlike the Cholesky factor, does not have a lower triangular structure.} $\boldsymbol{P}$ can then be obtained via the Cholesky decomposition $\boldsymbol{B} = \boldsymbol{L}\boldsymbol{L}^T$, by setting $\boldsymbol{P} = \boldsymbol{L}^T$.

As demonstrated in \cite{liu2025difference}, when $\boldsymbol{B} = \boldsymbol{I}$, GD with a fixed stepsize of $1/2$ becomes equivalent to the power method. In this sense, it provides an exact theoretical connection that fills the gap between GD and the power method. This property also holds in the GEP, as shown in the following theorem:

\begin{theorem}
	Let $\boldsymbol{P}$ be a preconditioner such that $\boldsymbol{\tilde{B}} = \boldsymbol{I}$. Then, the iterative scheme in \cref{eq:pmd} with a fixed stepsize $\alpha \equiv 1/2$ is equivalent to the power method for the GEP, ignoring normalization.
\end{theorem}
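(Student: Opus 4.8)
The plan is to prove the equivalence by direct algebraic substitution, tracking how the transform-domain update in \cref{eq:pmd} collapses once we set $\boldsymbol{\tilde{B}} = \boldsymbol{I}$ and $\alpha = 1/2$, and then mapping the result back to the primal variable $\boldsymbol{x}$ to recover the power-method linear system recorded in the earlier remark.

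First I would substitute $\boldsymbol{\tilde{B}} = \boldsymbol{I}$ and $\alpha = 1/2$ into the middle step of \cref{eq:pmd}. The linear term $-\alpha \cdot 2\boldsymbol{\tilde{B}}\boldsymbol{y}_k = -\boldsymbol{y}_k$ exactly cancels the leading $\boldsymbol{y}_k$, so the update reduces to
$$
\boldsymbol{y}_{k+1} = \frac{1}{2}\frac{\boldsymbol{\tilde{A}}\boldsymbol{y}_k}{\left(\boldsymbol{y}_k^T\boldsymbol{\tilde{A}}\boldsymbol{y}_k\right)^{\frac{1}{2}}}.
$$
This cancellation is the crux of why the choice $\alpha = 1/2$ is special: it removes the gradient-descent contraction term and leaves a pure normalized application of $\boldsymbol{\tilde{A}}$.

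Next I would translate this back to the primal variables using $\boldsymbol{y}_k = \boldsymbol{P}\boldsymbol{x}_k$ together with $\boldsymbol{\tilde{A}} = \boldsymbol{P}^{-T}\boldsymbol{A}\boldsymbol{P}^{-1}$. Two simplifications are immediate: the quadratic form is invariant, so $\boldsymbol{y}_k^T\boldsymbol{\tilde{A}}\boldsymbol{y}_k = \boldsymbol{x}_k^T\boldsymbol{A}\boldsymbol{x}_k$, and the matrix-vector product telescopes to $\boldsymbol{\tilde{A}}\boldsymbol{y}_k = \boldsymbol{P}^{-T}\boldsymbol{A}\boldsymbol{x}_k$. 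Applying the final map $\boldsymbol{x}_{k+1} = \boldsymbol{P}^{-1}\boldsymbol{y}_{k+1}$ then produces the factor $\boldsymbol{P}^{-1}\boldsymbol{P}^{-T}$ acting on $\boldsymbol{A}\boldsymbol{x}_k$.

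The key step is to recognize that the condition $\boldsymbol{\tilde{B}} = \boldsymbol{I}$, namely $\boldsymbol{P}^{-T}\boldsymbol{B}\boldsymbol{P}^{-1} = \boldsymbol{I}$, is equivalent to $\boldsymbol{B} = \boldsymbol{P}^T\boldsymbol{P}$, whence $\boldsymbol{P}^{-1}\boldsymbol{P}^{-T} = (\boldsymbol{P}^T\boldsymbol{P})^{-1} = \boldsymbol{B}^{-1}$. Substituting this identity gives
$$
\boldsymbol{x}_{k+1} = \frac{\boldsymbol{B}^{-1}\boldsymbol{A}\boldsymbol{x}_k}{2\left(\boldsymbol{x}_k^T\boldsymbol{A}\boldsymbol{x}_k\right)^{\frac{1}{2}}},
$$
and left-multiplying by $\boldsymbol{B}$ recovers exactly the power-method system $\boldsymbol{B}\boldsymbol{x}_{k+1} = \boldsymbol{A}\boldsymbol{x}_k / \bigl(2(\boldsymbol{x}_k^T\boldsymbol{A}\boldsymbol{x}_k)^{1/2}\bigr)$. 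Since the entire derivation is algebraic substitution, there is no genuine analytic obstacle; the only point requiring care is the bookkeeping of transposes in the congruence $\boldsymbol{P}^{-T}\boldsymbol{B}\boldsymbol{P}^{-1}$ and a consistent treatment of the normalization factor, which is precisely the aspect the statement permits us to disregard.
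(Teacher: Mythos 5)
Your proposal is correct and follows essentially the same route as the paper's proof: a direct algebraic substitution of $\alpha = 1/2$ and the identity $\boldsymbol{P}^T\boldsymbol{P} = \boldsymbol{B}$ into the update \cref{eq:pmd}, yielding $\boldsymbol{x}_{k+1} = \boldsymbol{B}^{-1}\boldsymbol{A}\boldsymbol{x}_k \big/ \bigl(2(\boldsymbol{x}_k^T\boldsymbol{A}\boldsymbol{x}_k)^{1/2}\bigr)$, i.e., the power method up to normalization. The only cosmetic difference is that you first simplify in the transformed $\boldsymbol{y}$-domain and then map back, while the paper performs the substitution in one step directly in the $\boldsymbol{x}$-domain; the algebra is identical.
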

\begin{proof}
	Substituting a preconditioner $\boldsymbol{P}$ satisfying $\boldsymbol{\tilde{B}} = \boldsymbol{I}$ and a fixed stepsize $\alpha \equiv 1/2$ into the update rule in \cref{eq:pmd}, we obtain:
	$$
	\begin{aligned}
		\boldsymbol{x}_{k+1} = \boldsymbol{P}^{-1}\left(\boldsymbol{P}\boldsymbol{x}_k - \frac{1}{2} \left(2 \boldsymbol{P}^{-T}\boldsymbol{B}\boldsymbol{x}_{k} - \frac{\boldsymbol{P}^{-T}\boldsymbol{A}\boldsymbol{x}_k}{\left(\boldsymbol{x}_k^T\boldsymbol{A}\boldsymbol{x}_k\right)^{\frac{1}{2}}}\right)\right) = \frac{\boldsymbol{B}^{-1}\boldsymbol{Ax}_k}{2\left(\boldsymbol{x}_k^T\boldsymbol{A}\boldsymbol{x}_k\right)^{\frac{1}{2}}},
	\end{aligned}
	$$
	where the second equality uses the assumption $\boldsymbol{P}^T\boldsymbol{P} = \boldsymbol{B}$.
	
	This shows that the update is equivalent to:
	$$
	\boldsymbol{x}_{k+1} \propto \boldsymbol{B}^{-1} \boldsymbol{A} \boldsymbol{x}_k,
	$$
	which corresponds to the power method for the GEP, up to a normalization factor.
\end{proof}

From the perspective of PMD, the power method can essentially be viewed as a GD method in a transformed domain with a fixed stepsize $\alpha \equiv 1/2$. 
In fact, leveraging \cref{thm:global_convergence}, we can show that convergence is guaranteed for any stepsize $\alpha \in (0, 1)$:

\begin{corollary}
	Let $\boldsymbol{P}$ be a preconditioner such that $\boldsymbol{\tilde{B}} = \boldsymbol{I}$. 
	Consider the iterative scheme in \cref{eq:pmd} with a fixed stepsize $\alpha \equiv \alpha_0$, where $\alpha_0$ is sampled uniformly from the interval $(0,1)$. Then, the generated sequence $\left\{\boldsymbol{x}_k\right\}$ converges to a global minimizer of $f$ with probability one.
\end{corollary}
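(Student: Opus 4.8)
The plan is to recognize the corollary as a direct instance of \cref{thm:global_convergence}, applied not to the original problem but to its preconditioned counterpart \cref{eq:gep_difference_transform}, and then to transport the resulting convergence statement back to the primal variable through the linear change of coordinates $\boldsymbol{y} = \boldsymbol{P}\boldsymbol{x}$.

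First I would observe that the middle update in \cref{eq:pmd} is exactly a gradient descent step with fixed stepsize $\alpha$ on the transformed objective $g(\boldsymbol{y}) = \boldsymbol{y}^T\boldsymbol{\tilde{B}}\boldsymbol{y} - (\boldsymbol{y}^T\boldsymbol{\tilde{A}}\boldsymbol{y})^{1/2}$, since the bracketed term is precisely $\nabla g(\boldsymbol{y}_k)$. The outer maps $\boldsymbol{y}_k = \boldsymbol{P}\boldsymbol{x}_k$ and $\boldsymbol{x}_{k+1} = \boldsymbol{P}^{-1}\boldsymbol{y}_{k+1}$ merely express this same iteration in primal coordinates, so the $\boldsymbol{y}$-sequence produced by \cref{eq:pmd} is identical to the one generated by plain GD on $g$, and the analysis reduces entirely to the $\boldsymbol{y}$-domain.

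Next I would verify that $g$ satisfies every hypothesis of \cref{thm:global_convergence}. The matrix $\boldsymbol{\tilde{A}} = \boldsymbol{P}^{-T}\boldsymbol{A}\boldsymbol{P}^{-1}$ is symmetric PSD and $\boldsymbol{\tilde{B}} = \boldsymbol{I}$ is PD, so $g$ is again a difference-based GEP objective of the form \cref{eq:gep_difference}. From the Hessian structure in \cref{eq:hesse_f} together with Remark (i) following \cref{thm:global_convergence}, its positive Lipschitz constant is $L_+ = 2\lambda_1(\boldsymbol{\tilde{B}}) = 2\lambda_1(\boldsymbol{I}) = 2$, which turns the admissible interval $(0, 2/L_+)$ into exactly $(0,1)$. \cref{thm:global_convergence} then guarantees that, for a uniformly random initialization and a stepsize $\alpha_0$ drawn uniformly from $(0,1)$, the sequence $\{\boldsymbol{y}_k\}$ converges to a global minimizer of $g$ with probability one.

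Finally, I would transport this back to the original variable. Because $\boldsymbol{P}$ is invertible, $\boldsymbol{x} \mapsto \boldsymbol{P}\boldsymbol{x}$ is a linear bijection that preserves the generalized eigenstructure: $\boldsymbol{\tilde{A}}\boldsymbol{y} = \lambda\boldsymbol{y}$ if and only if $\boldsymbol{A}\boldsymbol{x} = \lambda\boldsymbol{B}\boldsymbol{x}$ with $\boldsymbol{x} = \boldsymbol{P}^{-1}\boldsymbol{y}$, so global minimizers of $g$ correspond exactly to global minimizers of $f$ (the eigenvectors of $(\boldsymbol{A},\boldsymbol{B})$ associated with $\lambda_1$). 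Hence $\boldsymbol{x}_k = \boldsymbol{P}^{-1}\boldsymbol{y}_k$ converges to a global minimizer of $f$. The one point requiring care—and the hard part of the argument—is the probabilistic statement: \cref{thm:global_convergence} is phrased for a uniformly random iterate in the $\boldsymbol{y}$-domain, whereas the corollary concerns the primal scheme. Since a linear isomorphism sends Lebesgue-null sets to Lebesgue-null sets in both directions, the exceptional set of bad primal initializations is the $\boldsymbol{P}^{-1}$-image of a null set and is therefore itself null; thus the ``probability one'' conclusion is preserved under the change of coordinates, which completes the proof.
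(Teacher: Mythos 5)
Your proposal is correct and follows essentially the same route as the paper, which obtains the corollary by applying \cref{thm:global_convergence} to the transformed objective in \cref{eq:gep_difference_transform}, where $\boldsymbol{\tilde{B}} = \boldsymbol{I}$ gives $L_+ = 2\lambda_1(\boldsymbol{\tilde{B}}) = 2$ and hence the stepsize interval $(0, 2/L_+) = (0,1)$. Your closing observation that the linear isomorphism $\boldsymbol{P}$ maps Lebesgue-null sets to null sets, so the probability-one guarantee transfers between the primal and transformed domains, is a point the paper leaves implicit and is a welcome addition of rigor.
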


On one hand, we establish the convergence of the power method from a novel optimization perspective, distinguishing our analysis from classical approaches in numerical linear algebra \cite{pm_gep_1992}. 
On the other hand, this insight implies that the power method can potentially be accelerated by adopting a larger stepsize.
This also addresses the question raised at the end of \cref{sec:motivation_pmd}: by introducing the PMD framework in the transformed domain, we uncover the connection between the power method and first-order optimization methods, characterize the trade-off between the power method and GD method, and thereby develop algorithms that outperform the classical power method.

Although the optimal preconditioner can be obtained via Cholesky decomposition, in practice, we often seek a more efficient approximation by imposing structural constraints on $\boldsymbol{P}$. This leads to the following constrained optimization problem:
\begin{equation}
	\min_{\boldsymbol{P} \in \mathcal{C}} \| \boldsymbol{B} - \boldsymbol{P}^T\boldsymbol{P}\|_F^2,
	\label{eq:opt_P}
\end{equation}
where $\mathcal{C}$ denotes a set of structural constraints.

\begin{remark}
	(i) If no constraint is imposed, i.e., $\mathcal{C} = \mathbb{R}^{n \times n}$, the optimal solution to problem \cref{eq:opt_P} corresponds to the Cholesky factor of $\boldsymbol{B}$.
	
	(ii) If $\boldsymbol{P}$ is constrained to be a diagonal matrix, the optimal solution to problem \cref{eq:opt_P} can be obtained by simply extracting the diagonal entries of $\boldsymbol{B}$, i.e.,
	$$
	\boldsymbol{P} = \operatorname{diag}(\sqrt{b_{11}},\sqrt{b_{22}},\ldots,\sqrt{b_{nn}}),
	$$
	where $b_{ii}$ denotes the $(i,i)$-th diagonal entry of $\boldsymbol{B}$.
	
	(iii) If a sparsity pattern or a low-rank structure is imposed, an approximate $\boldsymbol{P}$ can be obtained via incomplete Cholesky factorization \cite{icf_2002}.
\end{remark}

In this section, we reveal the first-order nature of the power method. Inspired by the recent work \cite{liu2025difference}, we aim to further extend the Split-Merge approach to the GEP, thereby benefiting from richer second-order information. Additionally, we explore its intrinsic connection with the transform-domain framework.

\section{Split-Merge Algorithm for the GEP} \label{sec:sm}

The Split-Merge algorithm \cite{liu2025difference} has demonstrated superior efficiency in solving standard EP. A natural next step is to extend it to GEP, and we show that this extension is straightforward within the transform-domain framework.

\subsection{Splitting}

The core idea of the Split-Merge approach is to construct a tighter surrogate function at each iterate $\boldsymbol{x}_k$ by leveraging richer second-order information, which is achieved through a splitting operation. Specifically, we define the matrix
$$
\boldsymbol{H}_{\boldsymbol{x}} (\boldsymbol{u}, \boldsymbol{v}) = 2 \boldsymbol{B} - \frac{1}{\left(\boldsymbol{x}^T\boldsymbol{A}\boldsymbol{x}\right)^{\frac{1}{2}}} \boldsymbol{F}^T \left(\boldsymbol{u}\boldsymbol{u}^T + \boldsymbol{v} \boldsymbol{v}^T\right)\boldsymbol{F} + \frac{\left(\boldsymbol{Ax}\right) \left(\boldsymbol{Ax}\right)^T}{\left(\boldsymbol{x}^T\boldsymbol{A}\boldsymbol{x}\right)^{\frac{3}{2}}},
$$
where $\boldsymbol{F}$ is a full-rank factor of the PSD matrix $\boldsymbol{A}$, and the vectors $\boldsymbol{u}$ and $\boldsymbol{v}$ satisfy $\|\boldsymbol{u}\|\leq 1$, $\|\boldsymbol{v}\| \leq 1$, and $\boldsymbol{u}^T \boldsymbol{v} = 0$.

Following a similar line of analysis as in \cite{liu2025difference}, we obtain the following result:
\begin{theorem} \label{thm:H_succeq}
	For any PSD matrix $\boldsymbol{A}$ admitting a full-rank factorization $\boldsymbol{A} = \boldsymbol{F}^T \boldsymbol{F}$, and for any vectors $\boldsymbol{u}$ and $\boldsymbol{v}$ such that $\|\boldsymbol{u}\|\leq 1$, $\|\boldsymbol{v}\| \leq 1$, and $\boldsymbol{u}^T\boldsymbol{v}=0$, the following inequality holds for all $\boldsymbol{x}$:
	$$
	\boldsymbol{H}_{\boldsymbol{x}} (\boldsymbol{u}, \boldsymbol{v}) \succeq \nabla^2 f(\boldsymbol{x}).
	$$
\end{theorem}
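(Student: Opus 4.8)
The plan is to prove the operator inequality by forming the difference $\boldsymbol{H}_{\boldsymbol{x}}(\boldsymbol{u},\boldsymbol{v}) - \nabla^2 f(\boldsymbol{x})$ directly and showing it is PSD. Comparing the definition of $\boldsymbol{H}_{\boldsymbol{x}}(\boldsymbol{u},\boldsymbol{v})$ with the Hessian formula in \cref{eq:hesse_f}, the terms $2\boldsymbol{B}$ and $(\boldsymbol{Ax})(\boldsymbol{Ax})^T/(\boldsymbol{x}^T\boldsymbol{A}\boldsymbol{x})^{3/2}$ appear identically in both matrices and cancel. What remains is
$$
\boldsymbol{H}_{\boldsymbol{x}}(\boldsymbol{u},\boldsymbol{v}) - \nabla^2 f(\boldsymbol{x}) = \frac{1}{(\boldsymbol{x}^T\boldsymbol{A}\boldsymbol{x})^{1/2}}\left(\boldsymbol{A} - \boldsymbol{F}^T(\boldsymbol{u}\boldsymbol{u}^T + \boldsymbol{v}\boldsymbol{v}^T)\boldsymbol{F}\right),
$$
so the claim reduces to proving that the bracketed matrix is PSD, since the scalar prefactor is strictly positive on the domain where the Hessian exists (i.e.\ where $\boldsymbol{Ax}\neq\boldsymbol{0}$).

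Next, I would substitute the full-rank factorization $\boldsymbol{A} = \boldsymbol{F}^T\boldsymbol{F}$ to rewrite the bracketed matrix as a congruence transform:
$$
\boldsymbol{A} - \boldsymbol{F}^T(\boldsymbol{u}\boldsymbol{u}^T + \boldsymbol{v}\boldsymbol{v}^T)\boldsymbol{F} = \boldsymbol{F}^T\left(\boldsymbol{I} - \boldsymbol{u}\boldsymbol{u}^T - \boldsymbol{v}\boldsymbol{v}^T\right)\boldsymbol{F}.
$$
Because a congruence $\boldsymbol{F}^T\boldsymbol{M}\boldsymbol{F}$ preserves positive semidefiniteness whenever $\boldsymbol{M}\succeq 0$, it suffices to establish the reduced inequality $\boldsymbol{I} - \boldsymbol{u}\boldsymbol{u}^T - \boldsymbol{v}\boldsymbol{v}^T \succeq 0$ on the factor space $\mathbb{R}^r$, where $r$ is the rank of $\boldsymbol{A}$.

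For this final step I would invoke the orthogonality and norm constraints on $\boldsymbol{u},\boldsymbol{v}$. Since $\boldsymbol{u}^T\boldsymbol{v}=0$, the two rank-one terms have orthogonal ranges, so $\boldsymbol{u}$ and $\boldsymbol{v}$ are eigenvectors of $\boldsymbol{M} = \boldsymbol{u}\boldsymbol{u}^T + \boldsymbol{v}\boldsymbol{v}^T$ with eigenvalues $\|\boldsymbol{u}\|^2$ and $\|\boldsymbol{v}\|^2$, respectively, while every vector orthogonal to both lies in the kernel. Hence the largest eigenvalue of the PSD matrix $\boldsymbol{M}$ is $\max(\|\boldsymbol{u}\|^2,\|\boldsymbol{v}\|^2)\leq 1$ by the assumptions $\|\boldsymbol{u}\|\leq 1$ and $\|\boldsymbol{v}\|\leq 1$, which gives $\boldsymbol{M}\preceq\boldsymbol{I}$ and thus the desired inequality. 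Chaining the three steps back together completes the proof.

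As for the main obstacle, the argument is essentially a direct computation with no deep difficulty, but the step that genuinely uses the hypotheses is the last one: the orthogonality condition $\boldsymbol{u}^T\boldsymbol{v}=0$ is precisely what makes the spectrum of $\boldsymbol{M}$ separate cleanly into $\|\boldsymbol{u}\|^2$ and $\|\boldsymbol{v}\|^2$. Without orthogonality, $\boldsymbol{M}$ could have spectral radius exceeding $1$ even under the norm bounds, and the surrogate inequality would fail. I would also be careful to state that the inequality holds only on the set where $\nabla^2 f$ is defined, as the factor $(\boldsymbol{x}^T\boldsymbol{A}\boldsymbol{x})^{-1/2}$ requires $\boldsymbol{Ax}\neq\boldsymbol{0}$.
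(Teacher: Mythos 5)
Your proof is correct and takes essentially the same route the paper intends: the paper states the theorem without an explicit proof, deferring to the analogous analysis in \cite{liu2025difference}, which is precisely your argument --- cancel the shared terms $2\boldsymbol{B}$ and the rank-one outer product, factor the difference as the congruence $\left(\boldsymbol{x}^T\boldsymbol{A}\boldsymbol{x}\right)^{-\frac{1}{2}}\boldsymbol{F}^T\left(\boldsymbol{I}-\boldsymbol{u}\boldsymbol{u}^T-\boldsymbol{v}\boldsymbol{v}^T\right)\boldsymbol{F}$, and bound the spectrum of $\boldsymbol{u}\boldsymbol{u}^T+\boldsymbol{v}\boldsymbol{v}^T$ by $1$ using the orthogonality $\boldsymbol{u}^T\boldsymbol{v}=0$ and the norm constraints. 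Your closing caveat that the inequality is meaningful only on $\Theta=\left\{\boldsymbol{x}:\boldsymbol{A}\boldsymbol{x}\neq\boldsymbol{0}\right\}$, where $\boldsymbol{x}^T\boldsymbol{A}\boldsymbol{x}=\|\boldsymbol{F}\boldsymbol{x}\|^2>0$, is an accurate refinement of the theorem's ``for all $\boldsymbol{x}$.''
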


Assuming that $\boldsymbol{H}_{\boldsymbol{x}_k} (\boldsymbol{u}, \boldsymbol{v}) \succ 0$, we can derive a family of iterative methods by varying $\boldsymbol{u}$ and $\boldsymbol{v}$:
\begin{equation}
	\boldsymbol{x}_{k+1} = \arg \min_{\boldsymbol{x}} \phi_k (\boldsymbol{x}),
	\label{eq:general_v}
\end{equation}
where $\phi_k (\boldsymbol{x})$ denotes a general quadratic surrogate function of $f(\boldsymbol{x})$ at the current iterate $\boldsymbol{x}_k$:
\begin{equation}
	\phi_k (\boldsymbol{x}) = f(\boldsymbol{x}_k) + \left\langle \nabla f(\boldsymbol{x}_k), \boldsymbol{x} - \boldsymbol{x}_k \right\rangle + \frac{1}{2} (\boldsymbol{x} - \boldsymbol{x}_k)^T \boldsymbol{H}_{\boldsymbol{x}_k}(\boldsymbol{u},\boldsymbol{v}) (\boldsymbol{x} - \boldsymbol{x}_k).
	\label{eq:phi_x}
\end{equation}

Following a similar strategy to that in \cite{liu2025difference}, we fix $\boldsymbol{u} \equiv \frac{\boldsymbol{F} \boldsymbol{x}_k}{\|\boldsymbol{F} \boldsymbol{x}_k\|}$. In this case, the matrix $\boldsymbol{H}_{\boldsymbol{x}} (\boldsymbol{u}, \boldsymbol{v})$ takes the form
$$
\boldsymbol{H}_{\boldsymbol{x}} (\boldsymbol{u}, \boldsymbol{v}) = 2 \boldsymbol{B} - \frac{1}{\left(\boldsymbol{x}^T\boldsymbol{A}\boldsymbol{x}\right)^{\frac{1}{2}}} \boldsymbol{F}^T\boldsymbol{v} \left(\boldsymbol{F}^T\boldsymbol{v}\right)^T
$$
Applying the Sherman-Morrison-Woodbury formula \cite{woodbury_1950}, we obtain the inverse:
\begin{equation}
	\left(\boldsymbol{H}_{\boldsymbol{x}_k} (\boldsymbol{u}, \boldsymbol{v})\right)^{-1} = \frac{1}{2} \left(\boldsymbol{B}^{-1} + \frac{1}{2\sigma (\boldsymbol{x}_k^T\boldsymbol{A} \boldsymbol{x}_k)^{\frac{1}{2}}} \boldsymbol{B}^{-1} \boldsymbol{F}^T \boldsymbol{v} (\boldsymbol{B}^{-1} \boldsymbol{F}^T \boldsymbol{v})^T\right), 
	\label{eq:H_inv}
\end{equation}
where $\sigma = 1 - \frac{\boldsymbol{v}^T \boldsymbol{F} \boldsymbol{B}^{-1} \boldsymbol{F}^T \boldsymbol{v} }{2 (\boldsymbol{x}_k^T \boldsymbol{A} \boldsymbol{x}_k)^{\frac{1}{2}}} > 0$. This condition ensures the positive definiteness of the matrix $\boldsymbol{H}_{\boldsymbol{x}_k} (\boldsymbol{u}, \boldsymbol{v})$.

Substituting the inverse from \cref{eq:H_inv} into the update formula in \cref{eq:general_v}, we obtain:
\begin{equation}
	\begin{aligned}
		\boldsymbol{x}_{k+1} &= \boldsymbol{x}_k - \left(\boldsymbol{H}_{\boldsymbol{x}_k} (\boldsymbol{u}, \boldsymbol{v})\right)^{-1} \nabla f(\boldsymbol{x}_k) \\
		&= \frac{1}{2(\boldsymbol{x}_k^T \boldsymbol{A} \boldsymbol{x}_k)^{\frac{1}{2}}} \boldsymbol{B}^{-1} \boldsymbol{Ax}_k + \frac{(\boldsymbol{B}^{-1} \boldsymbol{F}^T \boldsymbol{v})^T \boldsymbol{Ax}_k}{4\sigma (\boldsymbol{x}_k^T \boldsymbol{A} \boldsymbol{x}_k)} \boldsymbol{B}^{-1} \boldsymbol{F}^T \boldsymbol{v},
	\end{aligned}
	\label{eq:general_v_expand}
\end{equation}
where the final equality uses the orthogonality condition $\boldsymbol{u}^T \boldsymbol{v} = 0$.

\begin{remark} \label{rmk:reduce_pm}
	When $\boldsymbol{v} = \boldsymbol{0}$, the update rule in \cref{eq:general_v_expand} simplifies to the classical power method for the GEP. In other words, the power method can be viewed as a special case within this family of algorithms.
\end{remark}

For a general choice of $\boldsymbol{v}$, the resulting surrogate function is tighter than those used in both GD and the classical power method. This relationship is formalized in the following proposition:
\begin{proposition} \label{prop:tigher_pm}
	Let $\boldsymbol{u} \equiv \frac{\boldsymbol{F} \boldsymbol{x}_k}{\|\boldsymbol{F} \boldsymbol{x}_k\|}$. For any vector $\boldsymbol{v}$ satisfying $\|\boldsymbol{v}\| \leq 1$ and $\boldsymbol{u}^T\boldsymbol{v}=0$, it holds that
	$$
	2 \lambda_1(\boldsymbol{B}) \boldsymbol{I} \succeq 2 \boldsymbol{B} \succeq \boldsymbol{H}_{\boldsymbol{x}_k} (\boldsymbol{u}, \boldsymbol{v}) \succeq \nabla^2 f(\boldsymbol{x}_k).
	$$
\end{proposition}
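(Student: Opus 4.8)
The plan is to verify the chain of three matrix inequalities one link at a time, since each reduces to an elementary fact once the specific choice $\boldsymbol{u} \equiv \boldsymbol{F}\boldsymbol{x}_k / \|\boldsymbol{F}\boldsymbol{x}_k\|$ is substituted and transitivity of $\succeq$ is invoked at the end.

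First I would dispatch the two outer links, which require essentially no work. The leftmost inequality $2\lambda_1(\boldsymbol{B})\boldsymbol{I} \succeq 2\boldsymbol{B}$ is immediate from the spectral theorem: every eigenvalue of the symmetric matrix $\boldsymbol{B}$ is bounded above by $\lambda_1(\boldsymbol{B})$, so $\lambda_1(\boldsymbol{B})\boldsymbol{I} - \boldsymbol{B} \succeq 0$. The rightmost inequality $\boldsymbol{H}_{\boldsymbol{x}_k}(\boldsymbol{u},\boldsymbol{v}) \succeq \nabla^2 f(\boldsymbol{x}_k)$ is precisely the statement of \cref{thm:H_succeq}, which applies because the required hypotheses are met by our choice: $\|\boldsymbol{u}\| = 1 \le 1$ by construction, while $\|\boldsymbol{v}\| \le 1$ and $\boldsymbol{u}^T\boldsymbol{v} = 0$ are assumed in the proposition.

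The only link needing a short computation is the middle inequality $2\boldsymbol{B} \succeq \boldsymbol{H}_{\boldsymbol{x}_k}(\boldsymbol{u},\boldsymbol{v})$. Here I would use the simplified form of $\boldsymbol{H}_{\boldsymbol{x}_k}(\boldsymbol{u},\boldsymbol{v})$ already derived in the text for $\boldsymbol{u} \equiv \boldsymbol{F}\boldsymbol{x}_k/\|\boldsymbol{F}\boldsymbol{x}_k\|$, namely $\boldsymbol{H}_{\boldsymbol{x}_k}(\boldsymbol{u},\boldsymbol{v}) = 2\boldsymbol{B} - (\boldsymbol{x}_k^T\boldsymbol{A}\boldsymbol{x}_k)^{-1/2}(\boldsymbol{F}^T\boldsymbol{v})(\boldsymbol{F}^T\boldsymbol{v})^T$, where the rank-one contribution from $\boldsymbol{u}\boldsymbol{u}^T$ exactly cancels the trailing outer-product term of the Hessian. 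Consequently $2\boldsymbol{B} - \boldsymbol{H}_{\boldsymbol{x}_k}(\boldsymbol{u},\boldsymbol{v}) = (\boldsymbol{x}_k^T\boldsymbol{A}\boldsymbol{x}_k)^{-1/2}(\boldsymbol{F}^T\boldsymbol{v})(\boldsymbol{F}^T\boldsymbol{v})^T$ is a nonnegative scalar times a rank-one outer product, hence PSD.

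There is no substantial obstacle; the only point requiring care is that the scalar $(\boldsymbol{x}_k^T\boldsymbol{A}\boldsymbol{x}_k)^{-1/2}$ is well-defined and strictly positive. This holds on the differentiability domain $\Theta = \{\boldsymbol{x} : \boldsymbol{A}\boldsymbol{x} \neq \boldsymbol{0}\}$ where the iteration operates, since for a PSD matrix $\boldsymbol{A}$ one has $\boldsymbol{x}_k^T\boldsymbol{A}\boldsymbol{x}_k > 0$ whenever $\boldsymbol{A}\boldsymbol{x}_k \neq \boldsymbol{0}$. Chaining the three verified links by transitivity of the Loewner order then yields the claimed inequality.
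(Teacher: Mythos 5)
Your proof is correct and is essentially the paper's own argument: the paper states \cref{prop:tigher_pm} without a separate proof, but the surrounding text already supplies exactly your three ingredients --- the simplified form $\boldsymbol{H}_{\boldsymbol{x}_k}(\boldsymbol{u},\boldsymbol{v}) = 2\boldsymbol{B} - \left(\boldsymbol{x}_k^T\boldsymbol{A}\boldsymbol{x}_k\right)^{-\frac{1}{2}}\left(\boldsymbol{F}^T\boldsymbol{v}\right)\left(\boldsymbol{F}^T\boldsymbol{v}\right)^T$ arising from the cancellation of the $\boldsymbol{u}\boldsymbol{u}^T$ term against the rank-one term (using $\boldsymbol{F}^T\boldsymbol{u}\boldsymbol{u}^T\boldsymbol{F} = (\boldsymbol{Ax}_k)(\boldsymbol{Ax}_k)^T/(\boldsymbol{x}_k^T\boldsymbol{A}\boldsymbol{x}_k)$), \cref{thm:H_succeq} for the right link, and the trivial spectral bound $\lambda_1(\boldsymbol{B})\boldsymbol{I} \succeq \boldsymbol{B}$ for the left. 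Your added observation that $\boldsymbol{x}_k^T\boldsymbol{A}\boldsymbol{x}_k > 0$ precisely on the domain $\Theta$ (since $\boldsymbol{F}^T$ has full column rank, $\boldsymbol{A}\boldsymbol{x}_k \neq \boldsymbol{0}$ iff $\boldsymbol{F}\boldsymbol{x}_k \neq \boldsymbol{0}$) is a correct and welcome precaution that the paper leaves implicit.
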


However, for a general choice of $\boldsymbol{v}$, this strategy relies on the decomposition $\boldsymbol{A} = \boldsymbol{F}^T \boldsymbol{F}$, which is often undesirable. One of the most elegant aspects of the Split-Merge method \cite{liu2025difference} is its ability to eliminate this dependence by selecting an optimal $\boldsymbol{v}$ that effectively merges the decomposition, resulting in a decomposition-free algorithm. This idea can be naturally extended to the GEP setting.

\subsection{Merging}

We adopt the same strategy as in \cite{liu2025difference} to select $\boldsymbol{v}$ so that the surrogate function $\phi_k(\boldsymbol{x})$ in \cref{eq:phi_x} achieves the maximum reduction:
\begin{equation}
	\hat{\boldsymbol{v}} = \arg \min_{\boldsymbol{v}} \left\{\min_{\boldsymbol{d} \in \mathbb{R}^n} \phi_k(\boldsymbol{x}_k + \boldsymbol{d})\right\},\ \text{s.t.} \  \boldsymbol{v} \in \Omega,
	\label{eq:optimal_v}
\end{equation}
where $\boldsymbol{d}$ denotes the search direction used to update the iterate, i.e., $\boldsymbol{x}_{k+1} = \boldsymbol{x}_{k}+ \boldsymbol{d}$. The set $\Omega$ is defined as
$$
\Omega = \left\{ \boldsymbol{v}: \boldsymbol{v}^T\boldsymbol{u} = 0, \boldsymbol{v}^T \boldsymbol{v} = \frac{1}{\rho} \right\},
$$
where $\rho \geq 1$ is a normalization parameter chosen to ensure that $\boldsymbol{H}_{\boldsymbol{x}_k} (\boldsymbol{u}, \boldsymbol{v}) \succ 0$ (i.e., $\sigma > 0$).

Nevertheless, problem \cref{eq:optimal_v} remains difficult to solve, as it reduces to a new GEP, as stated in the following theorem:
\begin{theorem} \label{thm:gep}
	The optimal solution of problem \cref{eq:optimal_v} is equivalent to solving the following GEP:
	\begin{equation}
		\hat{\boldsymbol{v}} = \arg \max_{\boldsymbol{v}} \frac{\boldsymbol{v}^T \boldsymbol{D} \boldsymbol{v}}{\boldsymbol{v}^T \boldsymbol{C} \boldsymbol{v}},\ \text{s.t.} \  \boldsymbol{v} \in \Omega,
		\label{eq:gep}
	\end{equation}
	where $\boldsymbol{D} =  \boldsymbol{q} \boldsymbol{q}^T \succeq 0,\ \boldsymbol{q} = \boldsymbol{F}\boldsymbol{B}^{-1}\boldsymbol{A}\boldsymbol{x}_k,$ and
	$$
	\boldsymbol{C} = \rho \boldsymbol{I} - \frac{\boldsymbol{F}\boldsymbol{B}^{-1}\boldsymbol{F}^T}{2(\boldsymbol{x}_k^T \boldsymbol{A} \boldsymbol{x}_k)^{\frac{1}{2}}} \succ 0.
	$$
\end{theorem}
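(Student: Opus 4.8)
The plan is to eliminate the inner variable $\boldsymbol{d}$ in closed form, then reduce the outer minimization over $\boldsymbol{v}$ to the claimed generalized Rayleigh quotient by substituting the Sherman--Morrison--Woodbury inverse \cref{eq:H_inv} and invoking the two defining constraints of $\Omega$. First, for any fixed $\boldsymbol{v}\in\Omega$ the parameter $\rho$ is chosen so that $\boldsymbol{H}_{\boldsymbol{x}_k}(\boldsymbol{u},\boldsymbol{v})\succ 0$, hence the surrogate $\phi_k(\boldsymbol{x}_k+\boldsymbol{d})$ in \cref{eq:phi_x} is a strictly convex quadratic in $\boldsymbol{d}$ with minimizer $\boldsymbol{d}^\star=-\boldsymbol{H}_{\boldsymbol{x}_k}^{-1}\nabla f(\boldsymbol{x}_k)$ and optimal value $f(\boldsymbol{x}_k)-\tfrac{1}{2}\nabla f(\boldsymbol{x}_k)^T\boldsymbol{H}_{\boldsymbol{x}_k}^{-1}\nabla f(\boldsymbol{x}_k)$. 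Because $f(\boldsymbol{x}_k)$ is independent of $\boldsymbol{v}$, problem \cref{eq:optimal_v} is equivalent to maximizing $g(\boldsymbol{v}):=\nabla f(\boldsymbol{x}_k)^T\boldsymbol{H}_{\boldsymbol{x}_k}^{-1}\nabla f(\boldsymbol{x}_k)$ over $\boldsymbol{v}\in\Omega$.

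Writing $t=(\boldsymbol{x}_k^T\boldsymbol{A}\boldsymbol{x}_k)^{1/2}$ and substituting \cref{eq:H_inv}, the $\tfrac{1}{2}\boldsymbol{B}^{-1}$ part of $\boldsymbol{H}_{\boldsymbol{x}_k}^{-1}$ contributes the $\boldsymbol{v}$-independent term $\tfrac{1}{2}\nabla f(\boldsymbol{x}_k)^T\boldsymbol{B}^{-1}\nabla f(\boldsymbol{x}_k)$, so maximizing $g$ is equivalent to maximizing the rank-one contribution $\tfrac{1}{4\sigma t}\bigl(\nabla f(\boldsymbol{x}_k)^T\boldsymbol{B}^{-1}\boldsymbol{F}^T\boldsymbol{v}\bigr)^2$. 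Expanding $\nabla f(\boldsymbol{x}_k)=2\boldsymbol{B}\boldsymbol{x}_k-\boldsymbol{A}\boldsymbol{x}_k/t$ from \cref{eq:grad_f}, the leading piece produces $2\boldsymbol{x}_k^T\boldsymbol{F}^T\boldsymbol{v}=2\|\boldsymbol{F}\boldsymbol{x}_k\|\,\boldsymbol{u}^T\boldsymbol{v}$, which vanishes by the orthogonality constraint $\boldsymbol{u}^T\boldsymbol{v}=0$. What remains is $\nabla f(\boldsymbol{x}_k)^T\boldsymbol{B}^{-1}\boldsymbol{F}^T\boldsymbol{v}=-\tfrac{1}{t}(\boldsymbol{F}\boldsymbol{B}^{-1}\boldsymbol{A}\boldsymbol{x}_k)^T\boldsymbol{v}=-\tfrac{1}{t}\boldsymbol{q}^T\boldsymbol{v}$, so the numerator becomes $t^{-2}\boldsymbol{v}^T\boldsymbol{D}\boldsymbol{v}$ with $\boldsymbol{D}=\boldsymbol{q}\boldsymbol{q}^T$ and $\boldsymbol{q}=\boldsymbol{F}\boldsymbol{B}^{-1}\boldsymbol{A}\boldsymbol{x}_k$.

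For the denominator I use the norm constraint: on $\Omega$ we have $\boldsymbol{v}^T\boldsymbol{v}=1/\rho$, so $\rho\,\boldsymbol{v}^T\boldsymbol{v}=1$ and therefore $\sigma=1-\tfrac{\boldsymbol{v}^T\boldsymbol{F}\boldsymbol{B}^{-1}\boldsymbol{F}^T\boldsymbol{v}}{2t}=\boldsymbol{v}^T\bigl(\rho\boldsymbol{I}-\tfrac{\boldsymbol{F}\boldsymbol{B}^{-1}\boldsymbol{F}^T}{2t}\bigr)\boldsymbol{v}=\boldsymbol{v}^T\boldsymbol{C}\boldsymbol{v}$. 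Collecting the positive constant $1/(4t^3)$, the objective $g(\boldsymbol{v})$ equals, up to an additive and a positive multiplicative constant, the generalized Rayleigh quotient $\boldsymbol{v}^T\boldsymbol{D}\boldsymbol{v}/(\boldsymbol{v}^T\boldsymbol{C}\boldsymbol{v})$, so its maximizer over $\Omega$ coincides with that of \cref{eq:gep}. The positive definiteness $\boldsymbol{C}\succ0$ is exactly the requirement $\sigma>0$ that the choice of $\rho$ enforces on $\Omega$, while $\boldsymbol{D}\succeq0$ is immediate from its rank-one form.

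I expect the main bookkeeping obstacle to be the second step: isolating the single $\boldsymbol{v}$-dependent rank-one term of $\boldsymbol{H}_{\boldsymbol{x}_k}^{-1}$ and confirming that the linear-in-$\boldsymbol{v}$ cross term is annihilated precisely by $\boldsymbol{u}^T\boldsymbol{v}=0$. The conceptual crux, however, lies in the third step, where the homogenizing effect of the norm constraint $\boldsymbol{v}^T\boldsymbol{v}=1/\rho$ turns the scalar normalizer $\sigma$ into the quadratic form $\boldsymbol{v}^T\boldsymbol{C}\boldsymbol{v}$; this is what upgrades a constrained maximization of a rank-one-over-scalar expression into a genuine GEP and ultimately makes the decomposition-free merging step possible.
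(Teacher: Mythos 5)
Your proposal is correct and follows essentially the same route as the paper's proof: eliminate $\boldsymbol{d}$ in closed form, reduce \cref{eq:optimal_v} to maximizing $\nabla f(\boldsymbol{x}_k)^T \left(\boldsymbol{H}_{\boldsymbol{x}_k}(\boldsymbol{u},\boldsymbol{v})\right)^{-1} \nabla f(\boldsymbol{x}_k)$ over $\Omega$, substitute the Sherman--Morrison--Woodbury inverse \cref{eq:H_inv} with the cross term annihilated by $\boldsymbol{v}^T\boldsymbol{F}\boldsymbol{x}_k=0$, and homogenize $\sigma$ into $\boldsymbol{v}^T\boldsymbol{C}\boldsymbol{v}$ via the constraint $\boldsymbol{v}^T\boldsymbol{v}=1/\rho$. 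If anything, your write-up is slightly more explicit than the paper's (tracking the $\boldsymbol{v}$-independent term from the $\tfrac{1}{2}\boldsymbol{B}^{-1}$ part and the positive constant $1/(4t^3)$), and it correctly uses $\boldsymbol{v}^T\boldsymbol{F}\boldsymbol{B}^{-1}\boldsymbol{F}^T\boldsymbol{v}$ where the paper's intermediate display has an apparent typo ($\boldsymbol{v}^T\boldsymbol{F}\boldsymbol{F}^T\boldsymbol{v}$) inconsistent with the stated $\boldsymbol{C}$.
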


\begin{proof}
	Given that the matrix $\boldsymbol{H}_{\boldsymbol{x}_k} (\boldsymbol{u},\boldsymbol{v})$ is PD, we first consider the update direction for a fixed $\boldsymbol{v}$. Specifically, we solve the subproblem
	$$
	\begin{aligned}
		\hat{\boldsymbol{d}} &= \arg \min_{\boldsymbol{d} \in \mathbb{R}^n} \phi_k(\boldsymbol{x}_k + \boldsymbol{d}) \\
		&= \arg \min_{\boldsymbol{d} \in \mathbb{R}^n} \nabla f(\boldsymbol{x}_k)^T\boldsymbol{d} + \frac{1}{2}\boldsymbol{d}^T \boldsymbol{H}_{\boldsymbol{x}_k} (\boldsymbol{u},\boldsymbol{v}) \boldsymbol{d} \\
		&= - \left(\boldsymbol{H}_{\boldsymbol{x}_k} (\boldsymbol{u},\boldsymbol{v})\right)^{-1} \nabla f(\boldsymbol{x}_k).
	\end{aligned}
	$$
	Substituting this expression into the outer optimization problem \cref{eq:optimal_v}, we derive
	$$
	\hat{\boldsymbol{v}} = \arg \max_{\boldsymbol{v}} \frac{1}{2} \nabla f(\boldsymbol{x}_k)^T \left(\boldsymbol{H}_{\boldsymbol{x}_k} (\boldsymbol{u},\boldsymbol{v})\right)^{-1}  \nabla f(\boldsymbol{x}_k),\ \text{s.t.} \  \boldsymbol{v} \in \Omega.
	$$
	To proceed, we substitute the explicit forms of $\nabla f(\boldsymbol{x})$ and $\boldsymbol{H}_{\boldsymbol{x}_k} (\boldsymbol{u},\boldsymbol{v})^{-1}$ from \cref{eq:grad_f} and \cref{eq:H_inv}, respectively. Under the orthogonality constraint $\boldsymbol{v}^T\boldsymbol{Fx}_k=0$, the objective simplifies to:
	$$
	\hat{\boldsymbol{v}} = \arg \max_{\boldsymbol{v}} \frac{\left(\boldsymbol{v}^T \boldsymbol{F}\boldsymbol{B}^{-1}\boldsymbol{A}\boldsymbol{x}_k\right)^2}{1 - \frac{\boldsymbol{v}^T \boldsymbol{F}\boldsymbol{B}^{-1}\boldsymbol{F}^T \boldsymbol{v}}{2(\boldsymbol{x}_k^T \boldsymbol{A} \boldsymbol{x}_k)^{\frac{1}{2}}}},\ \text{s.t.} \  \boldsymbol{v} \in \Omega.
	$$
	Applying the regularization constraint $\boldsymbol{v}^T\boldsymbol{v} = \frac{1}{\rho}$, the expression can be further rewritten as:
	$$
	\hat{\boldsymbol{v}} = \arg \max_{\boldsymbol{v}} \frac{\left(\boldsymbol{v}^T \boldsymbol{F}\boldsymbol{B}^{-1}\boldsymbol{A}\boldsymbol{x}_k\right)^2}{\rho \boldsymbol{v}^T\boldsymbol{v} - \frac{\boldsymbol{v}^T \boldsymbol{FF}^T \boldsymbol{v}}{2(\boldsymbol{x}_k^T \boldsymbol{A} \boldsymbol{x}_k)^{\frac{1}{2}}}},\ \text{s.t.} \  \boldsymbol{v} \in \Omega.
	$$
	Finally, this expression can be compactly formulated as a generalized Rayleigh quotient:
	$$
	\hat{\boldsymbol{v}} = \arg \max_{\boldsymbol{v}} \frac{\boldsymbol{v}^T \boldsymbol{D} \boldsymbol{v}}{\boldsymbol{v}^T \boldsymbol{C} \boldsymbol{v}},\ \text{s.t.} \  \boldsymbol{v} \in \Omega.
	$$
	Hence, the proof is complete.
\end{proof}

To mitigate the challenge of selecting the optimal $\hat{\boldsymbol{v}}$ in \cref{eq:gep}, we relax the objective by considering the following bounds:
$$
\left(\boldsymbol{v}^T \boldsymbol{F}\boldsymbol{B}^{-1}\boldsymbol{A}\boldsymbol{x}_k\right)^2 \leq \frac{\boldsymbol{v}^T \boldsymbol{D} \boldsymbol{v}}{\boldsymbol{v}^T \boldsymbol{C} \boldsymbol{v}} \leq \frac{\left(\boldsymbol{v}^T \boldsymbol{F}\boldsymbol{B}^{-1}\boldsymbol{A}\boldsymbol{x}_k\right)^2}{1 - \frac{\lambda_1}{2\rho(\boldsymbol{x}_k^T \boldsymbol{A} \boldsymbol{x}_k)^{\frac{1}{2}}}},
$$
where $\lambda_1$ is the dominant generalized eigenvalue of the matrix pair $\left(\boldsymbol{A},\boldsymbol{B}\right)$.

This relaxation yields the following optimization problem for selecting $\boldsymbol{v}$:
$$
\max_{\boldsymbol{v}} \boldsymbol{v}^T \boldsymbol{F}\boldsymbol{B}^{-1}\boldsymbol{A}\boldsymbol{x}_k,\ \text{s.t.} \  \boldsymbol{v} \in \Omega.
$$ 
According to the Karush-Kuhn-Tucker conditions \cite{boyd_cvx}, this problem admits a closed-form solution:
$$
\hat{\boldsymbol{v}} = \frac{\boldsymbol{F}\boldsymbol{B}^{-1}\boldsymbol{A}\boldsymbol{x}_k - \frac{\boldsymbol{x}_k^T \boldsymbol{A}\boldsymbol{B}^{-1}\boldsymbol{A} \boldsymbol{x}_k}{\boldsymbol{x}_k^T \boldsymbol{A} \boldsymbol{x}_k} \boldsymbol{Fx}_k}{\sqrt{\rho} \left\|\boldsymbol{F}\boldsymbol{B}^{-1}\boldsymbol{A}\boldsymbol{x}_k - \frac{\boldsymbol{x}_k^T \boldsymbol{A}\boldsymbol{B}^{-1}\boldsymbol{A} \boldsymbol{x}_k}{\boldsymbol{x}_k^T \boldsymbol{A} \boldsymbol{x}_k} \boldsymbol{Fx}_k\right\|}.
$$
Substituting this optimal $\hat{\boldsymbol{v}}$ into the update rule in \cref{eq:general_v_expand} yields the following iteration:
\begin{equation}
	\boldsymbol{x}_{k+1} = \zeta_k \boldsymbol{B}^{-1}\boldsymbol{A}\boldsymbol{x}_k + \omega_k \boldsymbol{B}^{-1} \boldsymbol{A} \boldsymbol{B}^{-1} \boldsymbol{A} \boldsymbol{x}_k,
	\label{eq:kkt_merge}
\end{equation}
where the scalar coefficients are given by
$$
\begin{aligned}
	&\zeta_k = \frac{1}{2(\boldsymbol{x}_k^T \boldsymbol{A} \boldsymbol{x}_k)^{\frac{1}{2}}} - \frac{1}{4\sigma_k\rho(\boldsymbol{x}_k^T \boldsymbol{A} \boldsymbol{x}_k)} \frac{\boldsymbol{x}_k^T\boldsymbol{A}\boldsymbol{B}^{-1}\boldsymbol{A}\boldsymbol{x}_k}{\boldsymbol{x}_k^T\boldsymbol{A}\boldsymbol{x}_k}, \\
	&\omega_k = \frac{1}{4\sigma_k\rho(\boldsymbol{x}_k^T \boldsymbol{A} \boldsymbol{x}_k)}.
\end{aligned}
$$
Moreover, the parameter $\sigma_k$ is defined as
$$
\sigma_k = 1 - \frac{\boldsymbol{z}_k^T\boldsymbol{B}^{-1}\boldsymbol{z}_k}{2\rho(\boldsymbol{x}_k^T \boldsymbol{A} \boldsymbol{x}_k)^{\frac{1}{2}} \delta_k},
$$
where
$$
\boldsymbol{z}_k = \boldsymbol{A}\boldsymbol{B}^{-1}\boldsymbol{A} \boldsymbol{x}_k - \frac{\boldsymbol{x}_k^T \boldsymbol{A}\boldsymbol{B}^{-1}\boldsymbol{A} \boldsymbol{x}_k}{\boldsymbol{x}_k^T \boldsymbol{A} \boldsymbol{x}_k} \boldsymbol{A} \boldsymbol{x}_k,
$$
and
$$
\delta_k = \boldsymbol{x}_k^T \boldsymbol{A}\boldsymbol{B}^{-1}\boldsymbol{A}\boldsymbol{B}^{-1}\boldsymbol{A} \boldsymbol{x}_k - \frac{(\boldsymbol{x}_k^T \boldsymbol{A}\boldsymbol{B}^{-1}\boldsymbol{A} \boldsymbol{x}_k)^2}{\boldsymbol{x}_k^T \boldsymbol{A} \boldsymbol{x}_k}.
$$
Notably, this iterative scheme completely avoids the explicit decomposition of $\boldsymbol{A} = \boldsymbol{F}^T \boldsymbol{F}$, rendering the method decomposition-free and computationally more efficient.

\textbf{Computational Cost}:
By leveraging the symmetry of the matrix pair $(\boldsymbol{A}, \boldsymbol{B})$, the computational cost can be significantly reduced by avoiding redundant calculations. Each Split-Merge step requires solving 2 linear systems, performing 2 matrix-vector products, and computing 4 vector-vector products. The linear system solutions can be further accelerated by precomputing and storing the Cholesky decomposition or by employing approximate solvers such as the preconditioned conjugate gradient (PCG) method \cite{xu2020practical}.

More importantly, we will further show within the transform-domain framework that this extension of the Split-Merge method from the standard EP to the GEP is both natural and straightforward, as demonstrated in the following theorem.
\begin{theorem} \label{thm:sm_ep_gep}
	The Split-Merge algorithm for the GEP is equivalent to applying the Split-Merge algorithm to the standard EP in a transformed domain, under a preconditioner $\boldsymbol{P}$ such that $\boldsymbol{\tilde{B}} = \boldsymbol{I}$.
\end{theorem}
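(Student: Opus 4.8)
The plan is to exploit the covariance of the entire Split-Merge construction under the linear change of variables $\boldsymbol{y} = \boldsymbol{P}\boldsymbol{x}$ with $\boldsymbol{P}^T\boldsymbol{P} = \boldsymbol{B}$. First I would observe that when $\tilde{\boldsymbol{B}} = \boldsymbol{I}$, the transformed problem \cref{eq:gep_difference_transform} is exactly the standard-EP difference objective $\tilde f(\boldsymbol{y}) = \boldsymbol{y}^T\boldsymbol{y} - (\boldsymbol{y}^T\tilde{\boldsymbol{A}}\boldsymbol{y})^{1/2}$ studied in \cite{liu2025difference}, and that $\tilde f(\boldsymbol{y}) = f(\boldsymbol{P}^{-1}\boldsymbol{y})$. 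Differentiating this identity gives the covariance relations $\nabla \tilde f(\boldsymbol{y}) = \boldsymbol{P}^{-T}\nabla f(\boldsymbol{x})$ and $\nabla^2\tilde f(\boldsymbol{y}) = \boldsymbol{P}^{-T}\nabla^2 f(\boldsymbol{x})\boldsymbol{P}^{-1}$, which I would take as the backbone of the argument.

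The crucial structural observation is that the factor-space vectors $\boldsymbol{u},\boldsymbol{v}$ are literally unchanged by the transformation. Indeed, $\tilde{\boldsymbol{F}} = \boldsymbol{F}\boldsymbol{P}^{-1}$ is a full-rank factor of $\tilde{\boldsymbol{A}}$ since $\tilde{\boldsymbol{F}}^T\tilde{\boldsymbol{F}} = \boldsymbol{P}^{-T}\boldsymbol{A}\boldsymbol{P}^{-1} = \tilde{\boldsymbol{A}}$, and it satisfies $\tilde{\boldsymbol{F}}\boldsymbol{y} = \boldsymbol{F}\boldsymbol{P}^{-1}\boldsymbol{P}\boldsymbol{x} = \boldsymbol{F}\boldsymbol{x}$. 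Consequently the pinned choice $\boldsymbol{u} = \tilde{\boldsymbol{F}}\boldsymbol{y}_k/\|\tilde{\boldsymbol{F}}\boldsymbol{y}_k\| = \boldsymbol{F}\boldsymbol{x}_k/\|\boldsymbol{F}\boldsymbol{x}_k\|$ coincides in both domains, and the constraint set $\Omega$ (which only sees $\boldsymbol{u}^T\boldsymbol{v}=0$ and $\boldsymbol{v}^T\boldsymbol{v}=1/\rho$) is preserved verbatim. I would then verify, using $\tilde{\boldsymbol{F}}\boldsymbol{y}=\boldsymbol{F}\boldsymbol{x}$ and $\boldsymbol{P}^T\tilde{\boldsymbol{A}}\boldsymbol{y} = \boldsymbol{A}\boldsymbol{x}$, that the splitting matrices are congruent, $\tilde{\boldsymbol{H}}_{\boldsymbol{y}}(\boldsymbol{u},\boldsymbol{v}) = \boldsymbol{P}^{-T}\boldsymbol{H}_{\boldsymbol{x}}(\boldsymbol{u},\boldsymbol{v})\boldsymbol{P}^{-1}$, by checking each of the three terms separately: the $2\boldsymbol{I}$ term returns $\boldsymbol{P}^T(2\boldsymbol{I})\boldsymbol{P} = 2\boldsymbol{B}$, the rank-two term is invariant, and the rank-one term maps $\boldsymbol{P}^T\tilde{\boldsymbol{A}}\boldsymbol{y} = \boldsymbol{A}\boldsymbol{x}$ together with $\boldsymbol{y}^T\tilde{\boldsymbol{A}}\boldsymbol{y} = \boldsymbol{x}^T\boldsymbol{A}\boldsymbol{x}$.

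With these congruences in hand, I would show the merging step is identical in the two domains. Carrying the EP analogue of \cref{thm:gep} through the substitution, the data of the merging GEP transform as $\tilde{\boldsymbol{q}} = \tilde{\boldsymbol{F}}\tilde{\boldsymbol{A}}\boldsymbol{y}_k = \boldsymbol{F}\boldsymbol{B}^{-1}\boldsymbol{A}\boldsymbol{x}_k = \boldsymbol{q}$ and $\tilde{\boldsymbol{C}} = \rho\boldsymbol{I} - \tilde{\boldsymbol{F}}\tilde{\boldsymbol{F}}^T/(2(\boldsymbol{y}_k^T\tilde{\boldsymbol{A}}\boldsymbol{y}_k)^{1/2}) = \boldsymbol{C}$, since $\tilde{\boldsymbol{F}}\tilde{\boldsymbol{F}}^T = \boldsymbol{F}\boldsymbol{B}^{-1}\boldsymbol{F}^T$ and $\boldsymbol{y}_k^T\tilde{\boldsymbol{A}}\boldsymbol{y}_k = \boldsymbol{x}_k^T\boldsymbol{A}\boldsymbol{x}_k$. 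Hence the optimization \cref{eq:gep}, its relaxation, and the resulting closed-form $\hat{\boldsymbol{v}}$ are all the same in both domains. Finally I would combine the pieces: the EP Split-Merge update $\boldsymbol{y}_{k+1} = \boldsymbol{y}_k - \tilde{\boldsymbol{H}}_{\boldsymbol{y}_k}^{-1}\nabla\tilde f(\boldsymbol{y}_k)$, together with $\tilde{\boldsymbol{H}}_{\boldsymbol{y}_k}^{-1} = \boldsymbol{P}\boldsymbol{H}_{\boldsymbol{x}_k}^{-1}\boldsymbol{P}^T$ and $\nabla\tilde f(\boldsymbol{y}_k) = \boldsymbol{P}^{-T}\nabla f(\boldsymbol{x}_k)$, collapses under $\boldsymbol{x}_{k+1} = \boldsymbol{P}^{-1}\boldsymbol{y}_{k+1}$ to $\boldsymbol{x}_{k+1} = \boldsymbol{x}_k - \boldsymbol{H}_{\boldsymbol{x}_k}^{-1}\nabla f(\boldsymbol{x}_k)$, which is precisely \cref{eq:general_v_expand} and hence the GEP iteration \cref{eq:kkt_merge}.

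I expect the main obstacle to be bookkeeping rather than conceptual: one must ensure that $\boldsymbol{F}$ and $\tilde{\boldsymbol{F}}$ can be taken with the same number of rows (which holds because congruence preserves $\mathrm{rank}(\boldsymbol{A})$), so that the factor-space vectors $\boldsymbol{u},\boldsymbol{v}$ genuinely live in a common space, and that the normalization parameter $\rho$ and the positive-definiteness condition $\sigma>0$ correspond across the two formulations. Once the congruence $\tilde{\boldsymbol{H}}_{\boldsymbol{y}} = \boldsymbol{P}^{-T}\boldsymbol{H}_{\boldsymbol{x}}\boldsymbol{P}^{-1}$ and the identity $\tilde{\boldsymbol{F}}\boldsymbol{y} = \boldsymbol{F}\boldsymbol{x}$ are in place, every remaining step is a mechanical substitution.
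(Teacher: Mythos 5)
Your proposal is correct, and it reaches the paper's conclusion by a finer-grained route. The paper's proof is a one-shot computation: it quotes the final decomposition-free Split-Merge update for the standard EP (with coefficients $\tilde{\zeta}_k,\tilde{\omega}_k,\tilde{\sigma}_k$), substitutes $\boldsymbol{y}_k=\boldsymbol{P}\boldsymbol{x}_k$, $\boldsymbol{x}_{k+1}=\boldsymbol{P}^{-1}\boldsymbol{y}_{k+1}$, and matches the resulting expression term-by-term against \cref{eq:kkt_merge}; it never looks inside the split or merge steps. You instead prove stage-by-stage equivariance: the objective identity $\tilde{f}(\boldsymbol{y})=f(\boldsymbol{P}^{-1}\boldsymbol{y})$, the gradient/Hessian covariances, the factor alignment $\tilde{\boldsymbol{F}}=\boldsymbol{F}\boldsymbol{P}^{-1}$ with $\tilde{\boldsymbol{F}}\boldsymbol{y}=\boldsymbol{F}\boldsymbol{x}$, the congruence $\tilde{\boldsymbol{H}}_{\boldsymbol{y}}(\boldsymbol{u},\boldsymbol{v})=\boldsymbol{P}^{-T}\boldsymbol{H}_{\boldsymbol{x}}(\boldsymbol{u},\boldsymbol{v})\boldsymbol{P}^{-1}$, and the literal invariance of the merging data $(\boldsymbol{q},\boldsymbol{C})$ and hence of $\hat{\boldsymbol{v}}$ and $\sigma$. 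All of these identities check out (e.g., $\boldsymbol{P}^T\tilde{\boldsymbol{A}}\boldsymbol{y}=\boldsymbol{A}\boldsymbol{x}$ and $\tilde{\boldsymbol{F}}\tilde{\boldsymbol{F}}^T=\boldsymbol{F}\boldsymbol{B}^{-1}\boldsymbol{F}^T$ are exactly right), and the final collapse $\tilde{\boldsymbol{H}}^{-1}=\boldsymbol{P}\boldsymbol{H}^{-1}\boldsymbol{P}^T$ gives $\boldsymbol{x}_{k+1}=\boldsymbol{x}_k-\boldsymbol{H}_{\boldsymbol{x}_k}^{-1}\nabla f(\boldsymbol{x}_k)$ as claimed. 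What each approach buys: the paper's substitution is shorter and self-contained given the quoted EP formulas, but it verifies only that the two closed forms agree for the specific relaxed $\hat{\boldsymbol{v}}$; your argument is stronger, since it shows the entire family of split iterations \cref{eq:general_v}--\cref{eq:general_v_expand} is equivariant for \emph{every} admissible $(\boldsymbol{u},\boldsymbol{v})$, that \cref{thm:gep} is the exact transform of its EP analogue rather than a coincidence of the end formulas, and that surrogate-tightness facts such as \cref{thm:H_succeq} transfer automatically to the GEP. Your bookkeeping worries are also easily discharged: choosing $\tilde{\boldsymbol{F}}=\boldsymbol{F}\boldsymbol{P}^{-1}$ fixes the row dimension (and the choice of factor is ultimately immaterial since the merged update is decomposition-free), while $\tilde{\sigma}=\sigma$ shows the positive-definiteness conditions correspond verbatim for the same $\rho$.
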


\begin{proof}
	Suppose there exists a preconditioner $\boldsymbol{P}$ satisfying $\boldsymbol{\tilde{B}} = \boldsymbol{I}$.
	Then, the original GEP reduces to a standard EP in the transformed domain $\boldsymbol{y} = \boldsymbol{P} \boldsymbol{x}$, namely:
	$$
	\min_{\boldsymbol{y} \in \mathbb{R}^n} \boldsymbol{y}^T\boldsymbol{y} - \left(\boldsymbol{y}^T \boldsymbol{\tilde{A}} \boldsymbol{y}\right)^{\frac{1}{2}}.
	$$
	Applying the Split-Merge algorithm to the standard EP in this transformed domain \cite{liu2025difference} yields:
	$$
	\boldsymbol{y}_{k+1} =  \tilde{\zeta}_k \boldsymbol{\tilde{A}}\boldsymbol{y}_k + \tilde{\omega}_k \boldsymbol{\tilde{A}}^2\boldsymbol{y}_k
	$$
	where
	$$
	\begin{aligned}
		&\tilde{\zeta}_k = \frac{1}{2(\boldsymbol{y}_k^T \boldsymbol{\tilde{A}} \boldsymbol{y}_k)^{\frac{1}{2}}} - \frac{1}{4\tilde{\sigma}_k\rho(\boldsymbol{y}_k^T \boldsymbol{\tilde{A}} \boldsymbol{y}_k)} \frac{\boldsymbol{y}_k^T\boldsymbol{\tilde{A}}^2\boldsymbol{y}_k}{\boldsymbol{y}_k^T\boldsymbol{\tilde{A}}\boldsymbol{y}_k}, \\
		&\tilde{\omega}_k = \frac{1}{4\tilde{\sigma}_k\rho(\boldsymbol{y}_k^T \boldsymbol{\tilde{A}} \boldsymbol{y}_k)}.
	\end{aligned}
	$$
	Here, the coefficient $\tilde{\sigma}_k$ is computed as:,
	$$
	\tilde{\sigma}_k = 1 - \frac{\left\| \boldsymbol{\tilde{A}}^2 \boldsymbol{y}_k - \frac{\boldsymbol{y}_k^T \boldsymbol{\tilde{A}}^2 \boldsymbol{y}_k}{\boldsymbol{y}_k^T \boldsymbol{\tilde{A}} \boldsymbol{y}_k} \boldsymbol{\tilde{A}} \boldsymbol{y}_k \right\|^2}{2\rho(\boldsymbol{y}_k^T \boldsymbol{\tilde{A}} \boldsymbol{y}_k)^{\frac{1}{2}} \left(\boldsymbol{y}_k^T \boldsymbol{\tilde{A}}^3 \boldsymbol{y}_k - \frac{(\boldsymbol{y}_k^T \boldsymbol{\tilde{A}}^2 \boldsymbol{y}_k)^2}{\boldsymbol{y}_k^T \boldsymbol{\tilde{A}} \boldsymbol{y}_k}\right)}.
	$$
	Substituting $\boldsymbol{y}_{k} = \boldsymbol{P} \boldsymbol{x}_{k}$ and $\boldsymbol{x}_{k+1} = \boldsymbol{P}^{-1} \boldsymbol{y}_{k+1}$, this procedure is equivalent to applying the Split-Merge algorithm to the GEP as formulated in \cref{eq:kkt_merge}.
\end{proof}

\begin{remark}
	Utilizing the result of \cref{thm:sm_ep_gep}, we can directly extend the convergence analysis of the Split-Merge algorithm for EP \cite{liu2025difference} to the GEP setting via the transform-domain framework.
\end{remark}

\begin{table} [t]
	\centering
	\caption{\textbf{Comparison of average running time (in seconds) across different matrix dimensions ($n$) and condition numbers of matrix $\boldsymbol{B}$ ($\kappa_{\boldsymbol{B}}$)}. The best results are highlighted in \textbf{bold}, and the second-best are \underline{underlined}. We also report the runtime of MATLAB's built-in solver \texttt{eigs} in the second-to-last column. If the Split-Merge algorithm runs faster than \texttt{eigs}, then the \texttt{eigs} runtime is marked with \uwave{a wavy underline}. The last column presents the speedup of Split-Merge over the power method, highlighted in \red{\textbf{red bold}}. If an algorithm fails to converge in some random trials, its runtime is annotated with \red{\cha}; averages are then computed only over the successful runs to ensure a fair pairwise comparison.}
	\label{tab:sota_condition_number}%
	\resizebox{1\linewidth}{!}
	{
		\begin{tabular}{cc|ccccc|c|c}
			\toprule
			$n$ & \multicolumn{1}{c}{$\kappa_{\boldsymbol{B}}$} & \textbf{GD} & \textbf{Power} & \textbf{PMD (Cholesky)} & \textbf{Lanczos} & \multicolumn{1}{c}{\textbf{Split-Merge}} & \multicolumn{1}{c}{\textbf{\texttt{Eigs}}} & \textbf{Speedup} \\
			\midrule
			\multirow{10}[1]{*}{\textbf{256}} & \textbf{3 } & 1.42E-02 & 1.97E-02 & \underline{1.21E-02} & 1.90E-02 & \textbf{5.23E-03} & \textit{\uwave{1.69E-02}} & \textcolor[rgb]{ 1,  0,  0}{\textbf{376.22\%}} \\
			& \textbf{5 } & 2.04E-02 & 2.52E-02 & 1.56E-02 & \underline{1.50E-02} & \textbf{6.78E-03} & \textit{\uwave{9.77E-03}} & \textcolor[rgb]{ 1,  0,  0}{\textbf{372.21\%}} \\
			& \textbf{8 } & 1.33E-02 & 6.61E-03 & \underline{4.15E-03} & 9.78E-03 & \textbf{2.35E-03} & \textit{\uwave{6.92E-03}} & \textcolor[rgb]{ 1,  0,  0}{\textbf{281.84\%}} \\
			& \textbf{10 } & 1.66E-02 & 9.12E-03 & \underline{5.74E-03} & 9.36E-03 & \textbf{2.93E-03} & \textit{\uwave{2.38E-02}} & \textcolor[rgb]{ 1,  0,  0}{\textbf{311.11\%}} \\
			& \textbf{13 } & 1.65E-02 & 6.98E-03 & \underline{4.36E-03} & 7.15E-03 & \textbf{2.48E-03} & \textit{\uwave{4.55E-03}} & \textcolor[rgb]{ 1,  0,  0}{\textbf{281.69\%}} \\
			& \textbf{30 } & 1.75E-02 & 3.38E-03 & \underline{2.18E-03} & 3.82E-03 & \textbf{1.55E-03} & \textit{\uwave{3.54E-03}} & \textcolor[rgb]{ 1,  0,  0}{\textbf{217.89\%}} \\
			& \textbf{40 } & 2.54E-02 & 4.36E-03 & \underline{2.83E-03} & 3.75E-03 & \textbf{1.85E-03} & \textit{\uwave{4.99E-03}} & \textcolor[rgb]{ 1,  0,  0}{\textbf{235.20\%}} \\
			& \textbf{50 } & 2.89E-02 & 4.44E-03 & \underline{2.85E-03} & 3.21E-03 & \textbf{1.76E-03} & \textit{\uwave{4.11E-03}} & \textcolor[rgb]{ 1,  0,  0}{\textbf{252.40\%}} \\
			& \textbf{80 } & 2.73E-02 & 2.72E-03 & \underline{1.77E-03} & 1.94E-03 & \textbf{1.37E-03} & \textit{\uwave{4.44E-03}} & \textcolor[rgb]{ 1,  0,  0}{\textbf{198.48\%}} \\
			& \textbf{100 } & 1.87E-02 & 1.42E-03 & \textbf{9.68E-04} & 1.94E-03 & \underline{9.79E-04} & \textit{\uwave{4.18E-03}} & \textcolor[rgb]{ 1,  0,  0}{\textbf{144.68\%}} \\
			\midrule
			\multirow{10}[0]{*}{\textbf{512}} & \textbf{3 } & 4.28E-02 & 1.03E-01 & \underline{6.42E-02} & 1.03E+00 & \textbf{2.69E-02} & 1.56E-02 & \textcolor[rgb]{ 1,  0,  0}{\textbf{384.30\%}} \\
			& \textbf{5 } & 1.75E-01 & 3.38E-01 & 2.06E-01 & \textbf{3.84E-02} & \underline{8.80E-02} & 1.57E-02 & \textcolor[rgb]{ 1,  0,  0}{\textbf{383.73\%}} \\
			& \textbf{8 } & 6.30E-02 & 7.03E-02 & 4.37E-02 & \underline{2.50E-02} & \textbf{2.13E-02} & 1.33E-02 & \textcolor[rgb]{ 1,  0,  0}{\textbf{330.62\%}} \\
			& \textbf{10 } & 1.18E-01 & 1.13E-01 & 7.12E-02 & \textbf{2.79E-02} & \underline{3.08E-02} & 1.45E-02 & \textcolor[rgb]{ 1,  0,  0}{\textbf{368.86\%}} \\
			& \textbf{13 } & 4.74E-02 & 2.91E-02 & \underline{1.84E-02} & 2.17E-02 & \textbf{9.96E-03} & \textit{\uwave{1.24E-02}} & \textcolor[rgb]{ 1,  0,  0}{\textbf{292.22\%}} \\
			& \textbf{30 } & 6.21E-02 & 1.68E-02 & \underline{1.09E-02} & 1.21E-02 & \textbf{6.96E-03} & \textit{\uwave{1.03E-02}} & \textcolor[rgb]{ 1,  0,  0}{\textbf{242.00\%}} \\
			& \textbf{40 } & 6.57E-02 & 1.56E-02 & 1.00E-02 & \underline{9.69E-03} & \textbf{6.89E-03} & \textit{\uwave{9.53E-03}} & \textcolor[rgb]{ 1,  0,  0}{\textbf{226.49\%}} \\
			& \textbf{50 } & 1.47E-01 & 2.57E-02 & 1.65E-02 & \underline{9.28E-03} & \textbf{9.23E-03} & \textit{\uwave{1.09E-02}} & \textcolor[rgb]{ 1,  0,  0}{\textbf{278.04\%}} \\
			& \textbf{80 } & 1.33E-01 & 1.56E-02 & 1.01E-02 & \underline{7.44E-03} & \textbf{6.51E-03} & \textit{\uwave{8.39E-03}} & \textcolor[rgb]{ 1,  0,  0}{\textbf{239.22\%}} \\
			& \textbf{100 } & 8.98E-02 & 8.99E-03 & 5.98E-03 & \underline{5.60E-03} & \textbf{4.86E-03} & \textit{\uwave{9.58E-03}} & \textcolor[rgb]{ 1,  0,  0}{\textbf{185.05\%}} \\
			\midrule
			\multirow{10}[1]{*}{\textbf{1024}} & \textbf{3 } & 3.46E-01 & 2.01E+00 & 1.29E+00 & \textbf{1.55E-01} (\red{\cha}) & \underline{4.97E-01} & 5.89E-02 & \textcolor[rgb]{ 1,  0,  0}{\textbf{404.90\%}} \\
			& \textbf{5 } & 1.43E-01 & 4.10E-01 & 2.58E-01 & \underline{1.11E-01} (\red{\cha}) & \textbf{1.09E-01} & 4.07E-02 & \textcolor[rgb]{ 1,  0,  0}{\textbf{375.31\%}} \\
			& \textbf{8 } & 1.45E-01 & 2.70E-01 & 1.72E-01 & \underline{8.40E-02} & \textbf{7.60E-02} & 4.38E-02 & \textcolor[rgb]{ 1,  0,  0}{\textbf{355.97\%}} \\
			& \textbf{10 } & 1.38E-01 & 2.00E-01 & 1.28E-01 & \underline{8.41E-02} & \textbf{5.97E-02} & 4.00E-02 & \textcolor[rgb]{ 1,  0,  0}{\textbf{335.42\%}} \\
			& \textbf{13 } & 2.83E-01 & 3.74E-01 & 2.37E-01 & \textbf{7.55E-02} & \underline{1.04E-01} & 3.56E-02 & \textcolor[rgb]{ 1,  0,  0}{\textbf{359.38\%}} \\
			& \textbf{30 } & 2.90E-01 & 1.62E-01 & 1.03E-01 & \textbf{4.14E-02} & \underline{5.07E-02} & 3.12E-02 & \textcolor[rgb]{ 1,  0,  0}{\textbf{319.38\%}} \\
			& \textbf{40 } & 3.11E-01 & 1.29E-01 & 8.20E-02 & \textbf{3.55E-02} & \underline{4.06E-02} & 2.54E-02 & \textcolor[rgb]{ 1,  0,  0}{\textbf{316.60\%}} \\
			& \textbf{50 } & 5.73E-01 & 2.00E-01 & 1.28E-01 & \textbf{3.57E-02} & \underline{5.95E-02} & 3.48E-02 & \textcolor[rgb]{ 1,  0,  0}{\textbf{335.76\%}} \\
			& \textbf{80 } & 2.68E-01 & 5.50E-02 & 3.62E-02 & \textbf{2.13E-02} & \underline{2.36E-02} & \textit{\uwave{3.04E-02}} & \textcolor[rgb]{ 1,  0,  0}{\textbf{232.99\%}} \\
			& \textbf{100 } & 3.84E-01 & 6.58E-02 & 4.24E-02 & \textbf{2.03E-02} & \underline{2.58E-02} & \textit{\uwave{2.82E-02}} & \textcolor[rgb]{ 1,  0,  0}{\textbf{254.74\%}} \\
			\bottomrule
		\end{tabular}%
	}
\end{table}

\section{Numerical Experiments} \label{sec:simulation}

In this section, we present experimental results for solving the $1$-GEP on both synthetic and real-world datasets, aiming to validate the proposed methods in terms of computational efficiency and numerical stability. All experiments were conducted using MATLAB R2021b on a Windows system, running on an Alienware x17 R2 laptop with an Intel i7-12700H processor (2.30 GHz) and 16 GB of RAM.

\subsection{Synthetic Dataset}

\begin{figure}[t]
	\centering
	\begin{subfigure}[b]{0.24\linewidth}
		\centering
		\includegraphics[width=\textwidth]{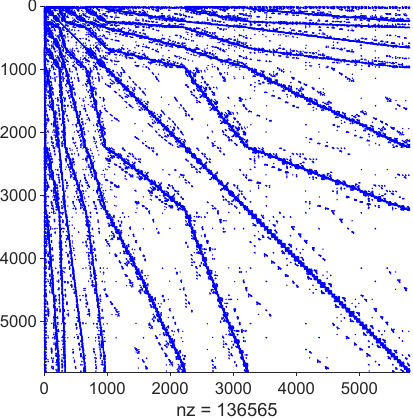}
		\caption{Lapla3\_A}
		\label{fig:lapla3_A}
	\end{subfigure}
	\hfill
	\begin{subfigure}[b]{0.24\linewidth}
		\centering
		\includegraphics[width=\textwidth]{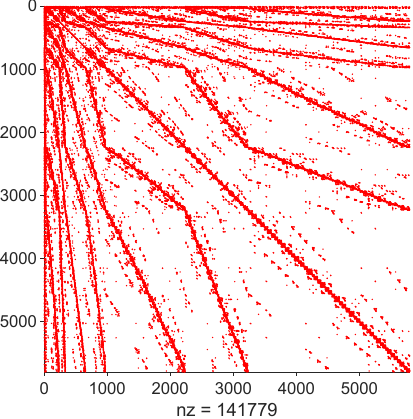}
		\caption{Lapla3\_B}
		\label{fig:lapla3_B}
	\end{subfigure}
	\hfill
	\begin{subfigure}[b]{0.24\linewidth}
		\centering
		\includegraphics[width=\textwidth]{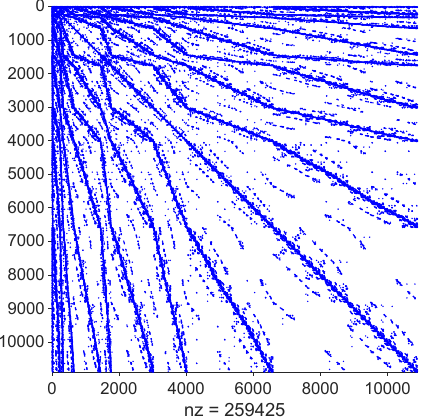}
		\caption{Lapla4\_A}
		\label{fig:lapla4_A}
	\end{subfigure}
	\hfill
	\begin{subfigure}[b]{0.24\linewidth}
		\centering
		\includegraphics[width=\textwidth]{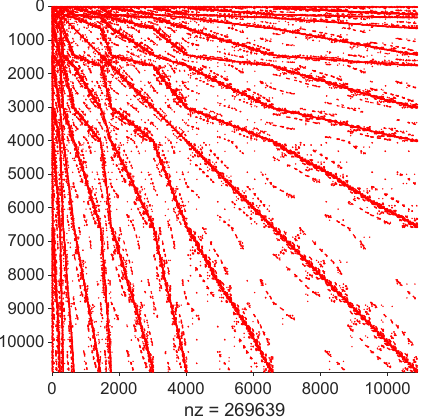}
		\caption{Lapla4\_B}
		\label{fig:lapla4_B}
	\end{subfigure}
	\caption{\textbf{Sparsity patterns of the two tested matrix pairs.} Dimensions: (a, b) Lapla3 with size 5,795; (c, d) Lapla4 with size 10,891. Number of nonzeros (nnz): (a) Lapla3\_A: 136,565; (b) Lapla3\_B: 141,779; (c) Lapla4\_A: 259,425; (d) Lapla4\_B: 269,639.}
	\label{fig:clustered_dataset}
\end{figure}

We begin by evaluating the performance of the proposed methods on synthetic datasets. The matrix pair $(\boldsymbol{A}, \boldsymbol{B})$ is randomly generated with a fixed condition number $\kappa_{\boldsymbol{A}}$, while varying the condition number $\kappa_{\boldsymbol{B}}$ and matrix dimension $n$ (see \cref{appendix:results_condition_number} for details).

We compare our methods with several established benchmarks, including the GD method \cite{auchmuty1991globally}, the power method \cite{pm_gep_1992}, and the Lanczos method \cite{gep_2011}. MATLAB's built-in solver \texttt{eigs} is also included as a baseline due to its efficiency and reliability in scientific computing.

The GD method is implemented with a fixed stepsize sampled uniformly at random from the interval $[0.9/\lambda_1(\boldsymbol{B}), 0.99/\lambda_1(\boldsymbol{B})]$, where $\lambda_1(\boldsymbol{B})$ is computed using \texttt{eigs}. We also evaluate the PMD method with a preconditioner $\boldsymbol{P} = \boldsymbol{L}^T$, where $\boldsymbol{L}$ is the Cholesky factor of $\boldsymbol{B}$. To ensure a fair comparison, the PMD method uses a fixed stepsize sampled uniformly at random from the interval $[0.9, 0.99]$, consistent with that of the GD method. For all methods, all linear systems involving $\boldsymbol{B}$ are solved using a pre-stored Cholesky decomposition.

The stopping criterion is defined as $\sin(\theta_k) \leq \epsilon$, where $\epsilon = 10^{-5}$ and $\theta_k$ is the angle between the $k$-th iterate $\boldsymbol{x}_k$ and the ground-truth eigenvector $\boldsymbol{u}_1$, obtained via \texttt{eigs}. Alternatively, the algorithm terminates when the number of iterations reaches the maximum limit of 100,000.
All experiments are repeated 100 times with different random initializations of $\boldsymbol{x}_0$, generated using MATLAB's \texttt{randn} function and kept identical across all methods. The average running time is used as the primary performance metric.

\cref{tab:sota_condition_number} reports the average running time across different matrix dimensions ($n$) and condition numbers of matrix $\boldsymbol{B}$ ($\kappa_{\boldsymbol{B}}$). The proposed Split-Merge method consistently outperforms existing benchmarks in nearly all settings. This advantage is particularly pronounced when $n = 256$, even compared to the subspace-based Lanczos method, which is known for its optimal per-iteration complexity.

Although Lanczos achieves slightly better average performance than Split-Merge when $n = 1024$, it exhibits numerical instability in certain cases. Specifically, when $\kappa_{\boldsymbol{B}} = 3$ and $5$, the success rates of convergence are only 55\% and 99\%, respectively. These cases involve tightly clustered eigenvalues and small eigen-gaps, which pose challenges for convergence. Such instability undermines the reliability of Lanczos in practical applications. In contrast, Split-Merge achieves successful convergence in all tested cases.

In addition, Split-Merge demonstrates efficiency comparable to that of MATLAB's built-in solver \texttt{eigs}. This is especially evident when $n$ is small, where Split-Merge consistently achieves lower average runtime. The results also show that Split-Merge consistently outperforms the power method, achieving a maximum observed speedup of 404.90\%.

The table further reveals that when $\kappa_{\boldsymbol{B}} < 10$, the GD method outperforms the power method. This supports the observation that in well-conditioned cases, the surrogate function used by GD, which depends only on the dominant eigenvalue of $\boldsymbol{B}$, already provides sufficient accuracy. Since GD does not require solving linear systems, it exhibits clear advantages in computational efficiency under such conditions.

Finally, the PMD method using the Cholesky-based preconditioner $\boldsymbol{P} = \boldsymbol{L}^T$ consistently outperforms the power method. This confirms that employing larger stepsizes in the transformed domain can significantly accelerate convergence. In particular, when $n = 256$, the PMD (Cholesky) method achieves performance that is close to optimal among all tested methods.

\begin{figure}[t]
	\centering
	\begin{subfigure}[b]{0.45\linewidth}
		\centering
		\includegraphics[width=\textwidth]{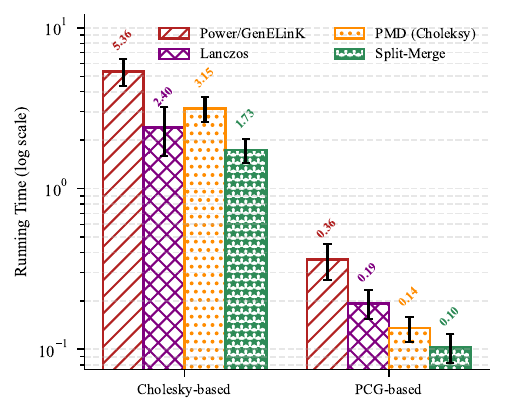}
		\caption{Lapla3}
		\label{fig:barplo_Lapla3}
	\end{subfigure}
	\hfill
	\begin{subfigure}[b]{0.45\linewidth}
		\centering
		\includegraphics[width=\textwidth]{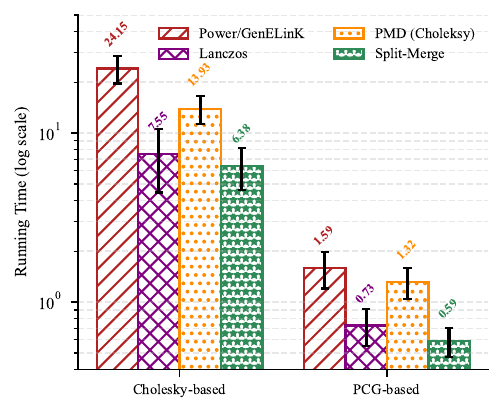}
		\caption{Lapla4}
		\label{fig:barplo_Lapla4}
	\end{subfigure}
	\caption{\textbf{Performance comparison of different methods on two real-world clustered matrix datasets.} (a) and (b) show the average running time (in seconds, on a logarithmic scale) for the Lapla3 and Lapla4 matrix pairs, respectively.}
	\label{fig:barplot_clustered}
\end{figure}

\begin{figure}[t]
	\centering
	\includegraphics[width=\linewidth]{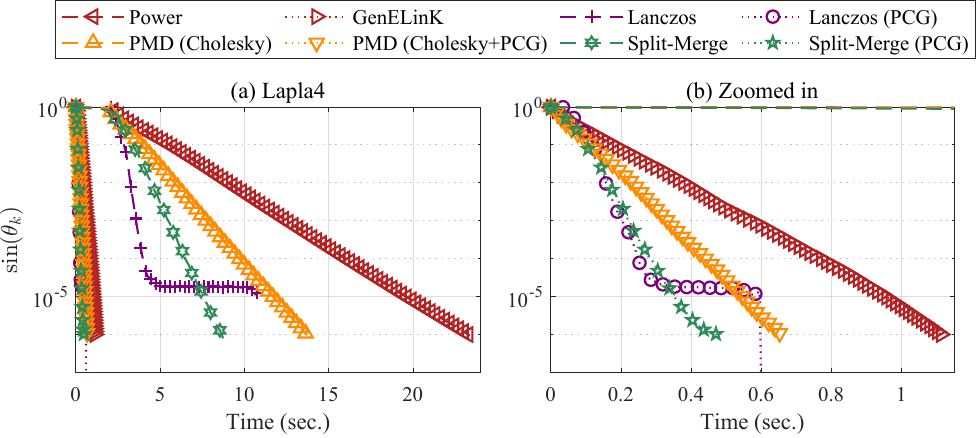}
	\caption{\textbf{Convergence comparison of different methods on the Lapla4 matrix pair}. (a) Overall comparison between Cholesky-based and PCG-based methods. (b) Zoomed-in view highlighting the performance of PCG-based methods.}
	\label{fig:sin_time_Lapla}
\end{figure}

\subsection{Real-World Dataset}

We further evaluate the performance of different algorithms on more challenging real-world datasets\footnote{\url{https://s2.smu.edu/yzhou/data/matrices.htm}}. These datasets exhibit clustered eigenvalue distributions, resulting in smaller eigen-gaps and thus making recovery more difficult. The sparsity patterns and statistical properties of the two selected large-scale matrices are illustrated in \cref{fig:clustered_dataset}.

For large-scale matrices with sparse structures, a key computational bottleneck lies in solving linear systems. Recent studies (e.g., GenELinK \cite{ge2016efficient}) have suggested that employing inexact solvers can significantly reduce the computational cost of this subproblem.

\cref{fig:barplot_clustered} evaluates the impact of inexact least-squares solvers. Each bar plot (mean $\pm$ std) compares two settings: the first employs Cholesky decomposition, while the second uses MATLAB's \texttt{PCG} solver with a fixed iteration cap of $m = 30$ across all algorithms. Each experiment is repeated 100 times with randomly initialized $\boldsymbol{x}_0$.

Split-Merge consistently outperforms all baselines, and its runtime is further reduced by factors of $17\times$ (Lapla3) and $11\times$ (Lapla4) when switching from Cholesky to PCG. Among all methods, Lanczos exhibits the highest runtime variance, reflecting its instability. This phenomenon is further explained in \cref{fig:sin_time_Lapla}, where the convergence curve of $\sin(\theta_k)$ flattens prematurely, indicating possible stagnation. As noted in prior work \cite{parlett1998symmetric}, the orthogonality of the Lanczos basis matrix may progressively deteriorate during the iteration process due to accumulated floating-point rounding errors. Such behavior poses risks in high-accuracy recovery tasks, where convergence may fail to occur.

\section{Conclusions and Future Work}\label{sec:conclusion}

This work revisits the GEP through a difference-based formulation with a structured quadratic objective.
The objective exhibits bounded positive curvature and contains no non-strict saddle points, enabling global convergence of first-order methods without the need for line search.
By introducing a transform-domain perspective, we reveal the first-order nature of the power method and propose an accelerated PMD algorithm that allows for larger stepsizes and improved convergence.
We further extend the Split-Merge algorithm to the general GEP setting and uncover its theoretical connection to the standard EP through the transform-domain framework.
Empirical evaluations on both synthetic and real-world datasets demonstrate substantial improvements in computational efficiency and numerical stability over existing baselines.
Future work includes developing adaptive preconditioner strategies and extending the framework to broader classes of structured eigenvalue problems.

\appendix

\section{From $1$-GEP to $k$-GEP: A Recursive Strategy} \label{appendix:deflation}

In this section, we consider extending the Split-Merge algorithm to compute the top-$k$ leading generalized eigenvectors of the matrix pair $(\boldsymbol{A}, \boldsymbol{B})$. As suggested in \cite{allen2017doubly}, the $k$-GEP can be addressed by recursively applying a $1$-GEP solver as a meta-algorithm for $k$ iterations. In other words, instead of computing the top-$k$ generalized eigenspace simultaneously, we sequentially compute the top-$k$ eigenvectors one by one.

Starting with $\boldsymbol{A}_0 = \boldsymbol{A}$, the Split-Merge algorithm computes, at the $s$-th iteration, the leading generalized eigenvector of the matrix pair $(\boldsymbol{A}_{s-1}, \boldsymbol{B})$, yielding an approximate solution $\boldsymbol{u}_s$. Then, a deflation step is performed by updating
$$
\boldsymbol{A}_{s} \leftarrow \boldsymbol{P} \boldsymbol{A}_{s-1} \boldsymbol{P}^T,
$$
where the projection matrix is given by $\boldsymbol{P} = \boldsymbol{I} - \boldsymbol{B}\boldsymbol{u}_s\boldsymbol{u}_s^T$.

While the underlying idea of this recursive strategy is conceptually simple, its theoretical analysis requires certain algebraic techniques. For further details, we refer the reader to the overview section in \cite{allen2016lazysvd}.

\section{Proof of \cref{lemma:positive_lipschitz}} \label{appendix:proof_positive_lipschitz}

\begin{proof}
	Since $f : \mathbb{R}^n \to \mathbb{R}$ is continuously differentiable with Hessian bounded above by $L_+ \boldsymbol{I}$, we consider the second-order Taylor expansion:
	$$
	f(\boldsymbol{y}) = f(\boldsymbol{x}) + \nabla f(\boldsymbol{x})^T (\boldsymbol{y} - \boldsymbol{x}) + \int_0^1 (1 - t)(\boldsymbol{y} - \boldsymbol{x})^T \nabla^2 f(\boldsymbol{x} + t(\boldsymbol{y} - \boldsymbol{x})) (\boldsymbol{y} - \boldsymbol{x}) dt.
	$$
	Using the assumption $\nabla^2 f(\boldsymbol{x}) \preceq L_+ \boldsymbol{I}$ for all $\boldsymbol{x}$, we obtain:
	$$
	(\boldsymbol{y} - \boldsymbol{x})^T \nabla^2 f(\boldsymbol{x} + t(\boldsymbol{y} - \boldsymbol{x})) (\boldsymbol{y} - \boldsymbol{x}) \leq L_+ \| \boldsymbol{y} - \boldsymbol{x} \|^2.
	$$
	Therefore,
	$$
	\begin{aligned}
		&\int_0^1 (1 - t) (\boldsymbol{y} - \boldsymbol{x})^T \nabla^2 f(\boldsymbol{x} + t(\boldsymbol{y} - \boldsymbol{x})) (\boldsymbol{y} - \boldsymbol{x}) dt \\ 
		&\leq \int_0^1 (1 - t) L_+ \| \boldsymbol{y} - \boldsymbol{x} \|^2 \, dt = \frac{L_+}{2} \| \boldsymbol{y} - \boldsymbol{x} \|^2.
	\end{aligned}
	$$
	Substituting yields the claimed inequality \cref{eq:upper_lip}.
\end{proof}

\section{Proof of \cref{lemma:descent}} \label{appendix:proof_descent}

\begin{proof}
	Applying the inequality \cref{eq:upper_lip} with $\boldsymbol{y} = \bar{\boldsymbol{x}}$, we obtain
	$$
	\begin{aligned}
		f(\bar{\boldsymbol{x}}) &\leq f(\boldsymbol{x}) + \nabla f(\boldsymbol{x})^T (\bar{\boldsymbol{x}} - \boldsymbol{x}) + \frac{L_+}{2} \| \bar{\boldsymbol{x}} - \boldsymbol{x} \|^2 \\
		&= f(\boldsymbol{x}) - \alpha \| \nabla f(\boldsymbol{x}) \|^2 + \frac{L_+}{2} \alpha^2 \| \nabla f(\boldsymbol{x}) \|^2 \\
		&= f(\boldsymbol{x}) - \alpha \left( 1 - \frac{\alpha L_+}{2} \right) \| \nabla f(\boldsymbol{x}) \|^2.
	\end{aligned}
	$$
	Since the condition $\alpha \in (0, 2/L_+)$ ensures that $1 - \frac{\alpha L_+}{2} > 0$, it follows that
	$$
	f(\bar{\boldsymbol{x}}) \leq f(\boldsymbol{x}).
	$$
	This proves that the function value is non-increasing along the gradient descent trajectory.
\end{proof}

\section{Proof of \cref{thm:convergence_stationary_point}} \label{appendix:proof_convergence_stationary_point}

\begin{proof}
	By \cref{lemma:descent}, we have the following inequality for each iteration:
	$$
	f(\boldsymbol{x}_{k+1}) \leq f(\boldsymbol{x}_k) - \alpha \left( 1 - \frac{\alpha L_+}{2} \right) \| \nabla f(\boldsymbol{x}_k) \|^2.
	$$
	Define the constant $c := \alpha \left( 1 - \frac{\alpha L_+}{2} \right)$, which is strictly positive by assumption on the stepsize. The above inequality implies that:
	$$
	f(\boldsymbol{x}_{k}) - f(\boldsymbol{x}_{k+1}) \geq c \| \nabla f(\boldsymbol{x}_k) \|^2.
	$$
	Summing both sides from $k=0$ to $N-1$ yields
	$$
	f(\boldsymbol{x}_{0}) - f(\boldsymbol{x}_{N}) \geq c \sum_{k=0}^{N-1} \| \nabla f(\boldsymbol{x}_k) \|^2.
	$$
	Since $f$ is bounded below, the sequence $\left\{f(\boldsymbol{x}_k)\right\}$ is non-increasing and convergent. Hence, letting $N \rightarrow \infty$, we obtain:
	$$
	\sum_{k=0}^{\infty} \| \nabla f(\boldsymbol{x}_k) \|^2 \leq \infty.
	$$
	This implies that $\| \nabla f(\boldsymbol{x}_k) \| \rightarrow 0$ as $k \rightarrow \infty$, completing the proof.
\end{proof}

\section{Synthetic Dataset Generation} \label{appendix:results_condition_number}

We construct a synthetic matrix pair $(\boldsymbol{A}, \boldsymbol{B})$, where the eigenvalues of $\boldsymbol{A}$ and $\boldsymbol{B}$ are evenly spaced within the intervals $[1/\kappa_{\boldsymbol{A}},1]$ and $[1/\kappa_{\boldsymbol{B}},1]$, respectively. We fix $\kappa_{\boldsymbol{A}}=100$, while varying $\kappa_{\boldsymbol{B}} \in \{3, 5, 8, 10, 13, 30, 40, 50, 80, 100\}$ and the matrix dimension $n\in \{256, 512, 1024\}$.

\section*{Acknowledgments}
We would like to thank Prof. Mengmeng Song for her helpful discussions and valuable suggestions.

\bibliographystyle{siamplain}
\bibliography{references}
\end{document}